\newtheorem{thm}{Theorem}[section]
\newtheorem{lem}[thm]{Lemma}
\theoremstyle{definition}
\newtheorem{dfn}[thm]{Definition}
\newtheorem*{claim*}{Claim}
\theoremstyle{remark}
\numberwithin{equation}{section}
\title{\vspace{-3cm}\textbf{The monotonicity method for the inverse crack scattering problem}}
\author{Tomohiro DAIMON\footnote{AISIN SOFTWARE Co., Ltd., Japan.}, Takashi FURUYA\footnote{Graduate School of Mathematics, Nagoya University, Japan.}, Ryuji SAIIN\footnotemark[1] }
\date{}
\begin{document}
\maketitle
\begin{abstract}
The monotonicity method for the inverse acoustic scattering problem is to understand the inclusion relation between an unknown object and artificial one by comparing the far field operator with artificial operator. This paper introduces the development of this method to the inverse crack scattering problem. Our aim is to give the following two indicators: One (Theorem 1.1) is to determine whether an artificial small arc is contained in the unknown arc. The other one (Theorem 1.2) is whether an artificial large domain contain the unknown one. Finally, numerical examples based on Theorem 1.1 are given.
\end{abstract}
\date{{\bf Key words}. inverse scattering, monotonicity method, crack detection, far field operator, helmholz equation}
\section{Introduction}
Let $\Gamma \subset \mathbb{R}^2$ be a smooth nonintersecting open arc, and we assume that $\Gamma$ can be extended to an arbitrary smooth, simply connected, closed curve $\partial \Omega$ enclosing a bounded domain $\Omega$ in $\mathbb{R}^2$. Let $k>0$ be the wave number, and let $\theta \in \mathbb{S}^{1}$ be incident direction, where $\mathbb{S}^{1}=\{x \in \mathbb{R}^2 : |x|=1 \}$ denotes the unit sphere in $\mathbb{R}^2$. We consider the following direct scattering problem: For $\theta \in \mathbb{S}^{1}$ determine $u^s$ such that
\begin{equation}
\Delta u^s+k^2u^s=0 \ \mathrm{in} \ \mathbb{R}^2\setminus  \Gamma, \label{1.1}
\end{equation}
\begin{equation}
u^s=-\mathrm{e}^{ik\theta \cdot x} \ \mathrm{on} \ \Gamma \label{1.2}
\end{equation}
\begin{equation}
\lim_{r \to \infty} \sqrt{r} \biggl( \frac{\partial u^{s}}{\partial  r}-iku^s \biggr)=0, \label{1.3}
\end{equation}
where $r=|x|$, and (\ref{1.3}) is the {\it Sommerfeld radiation condition}. Precisely, this problem is understood in the variational form, that is, determine $u^s \in H^{1}_{loc}(\mathbb{R}^2\setminus \Gamma)$ satisfying $u^s\bigl|_{\Gamma}=-\mathrm{e}^{ik\theta \cdot x}$, the Sommerfeld radiation condition (\ref{1.3}), and 
\begin{equation}
\int_{\mathbb{R}^2 \setminus \Gamma} \bigl[ \nabla u^s \cdot \nabla \overline{\varphi}-k^2u^s\overline{\varphi} \bigr]dx=0, \label{1.4}
\end{equation}
for all $\varphi \in H^{1}(\mathbb{R}^2\setminus \Gamma)$, $\varphi \bigl|_{\Gamma}=0$, with compact support. Here,  $H^{1}_{loc}(\mathbb{R}^2\setminus \Gamma)=\{u : \mathbb{R}^2\setminus \Gamma \to \mathbb{C} : u \bigl|_{B\setminus \Gamma} \in H^{1}(B\setminus \Gamma)\ \mathrm{for\ all\ open\ balls}\ B \}$ denotes the local Sobolev space of one order.
\par
It is well known that there exists a unique solution $u^{s}$ and it has the following asymptotic behavior (see, e.g., \cite{D. Colton and R. Kress}):
\begin{equation}
u^s(x)=\frac{\mathrm{e}^{ikr}}{\sqrt{r}}\Bigl\{ u^{\infty}(\hat{x},\theta)+O\bigl(1/r \bigr) \Bigr\} , \ r \to \infty, \ \ \hat{x}:=\frac{x}{|x|}. \label{1.5}
\end{equation}
The function $u^{\infty}$ is called the {\it far field pattern} of $u^s$. With the far field pattern $u^{\infty}$, we define the {\it far field operator} $F :L^{2}(\mathbb{S}^{1}) \to L^{2}(\mathbb{S}^{1})$ by
\begin{equation}
Fg(\hat{x}):=\int_{\mathbb{S}^{1}}u^{\infty}(\hat{x},\theta)g(\theta)ds(\theta), \ \hat{x} \in \mathbb{S}^{1}. \label{1.6}
\end{equation}
The inverse scattering problem we consider in this paper is to reconstruct the unknown arc $\Gamma$ from the far field pattern $u^{\infty}(\hat{x},\theta)$ for all $\hat{x} \in \mathbb{S}^{1}$, all $\hat{x} \in \mathbb{S}^{1}$ with one $k>0$. In other words, given the far field operator $F$, reconstruct $\Gamma$.
\par
In order to solve such an inverse problem, we use the idea of the monotonicity method. The feature of this method is to understand the inclusion relation of an unknown onject and artificial one by comparing the data operator with some operator corresponding to an artificial object. For recent works of the monotonicity method, we refer to \cite{R. Griesmaier and B. Harrach, B. Harrach and V. Pohjola and M. Salo1, B. Harrach and V. Pohjola and M. Salo2, B. Harrach and M. Ullrich1, B. Harrach and M. Ullrich2, Lakshtanov and Lechleiter}.
\par
Our aim in this paper is to provide the following two theorems.  
\begin{thm}
Let $\sigma \subset \mathbb{R}^2$ be a smooth nonintersecting open arc. Then,
\begin{equation}
\sigma \subset \Gamma \ \ \ \  \Longleftrightarrow \ \ \ \  H^{*}_{\sigma}H_{\sigma}\leq_{\mathrm{fin}} -\mathrm{Re}F,\label{1.7}
\end{equation}
where the Herglotz operator $H_{\sigma}:L^{2}(\mathbb{S}^{1}) \to L^{2}(\sigma)$ is given by
\begin{equation}
H_{\sigma}g(x):=\int_{\mathbb{S}^{1}}\mathrm{e}^{ik\theta \cdot x}g(\theta)ds(\theta), \ x \in \sigma, \label{1.8}
\end{equation}
and the inequality on the right hand side in (\ref{1.7}) denotes that $-\mathrm{Re}F-H^{*}_{\sigma}H_{\sigma}$ has only finitely many negative eigenvalues, and the real part of an operator $A$ is self-adjoint operators given by $\mathrm{Re}(A):=\displaystyle \frac{1}{2}(A+A^{*})$.
\end{thm}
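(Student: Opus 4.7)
The plan is to prove both directions of Theorem 1.1 via the factorization scheme familiar from the monotonicity method for closed sound-soft obstacles, suitably adapted to the open arc $\Gamma$. First I would represent the scattered field as a single-layer potential $u^{s}(x,\theta) = \int_\Gamma \Phi(x,y)\,\phi(y,\theta)\,ds(y)$, where $\Phi$ is the Helmholtz fundamental solution in $\mathbb{R}^{2}$ and the density $\phi(\cdot,\theta)$ is the jump of $\partial_\nu u^{s}$ across $\Gamma$. The Dirichlet condition (\ref{1.2}) recasts as the boundary integral equation $S\phi(\cdot,\theta) = -e^{ik\theta\cdot x}|_\Gamma$, where $S$ is the single-layer boundary operator on $\Gamma$. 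Superposing against $g\in L^{2}(\mathbb{S}^1)$ and extracting the far-field asymptotics gives
\begin{equation*}
F \;=\; -\gamma\, H_\Gamma^{*}\, S^{-1}\, H_\Gamma, \qquad \gamma = \tfrac{e^{i\pi/4}}{\sqrt{8\pi k}},
\end{equation*}
where $H_\Gamma : L^{2}(\mathbb{S}^1)\to L^{2}(\Gamma)$ is the natural analogue of $H_\sigma$. Because the kernel of $S$ differs from the Laplacian single-layer kernel by a smooth remainder, $\mathrm{Re}(\gamma S^{-1})$ is coercive on $H^{1/2}(\Gamma)$ modulo a finite-dimensional subspace.

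For the implication $\sigma\subset\Gamma \Longrightarrow H^{*}_\sigma H_\sigma \leq_{\mathrm{fin}} -\mathrm{Re}\,F$, observe that $H_\sigma g$ is simply the restriction of the Herglotz wave $Hg$ to $\sigma$, so
\begin{equation*}
\langle H^{*}_\sigma H_\sigma g, g\rangle_{L^{2}(\mathbb{S}^1)} = \int_\sigma |Hg|^{2}\,ds \le \int_\Gamma |Hg|^{2}\,ds = \langle H_\Gamma^{*} H_\Gamma g, g\rangle_{L^{2}(\mathbb{S}^1)},
\end{equation*}
reducing matters to showing $H_\Gamma^{*} H_\Gamma \leq_{\mathrm{fin}} -\mathrm{Re}\,F$. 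This last inequality I would extract from the factorization together with a Green-identity computation in $B_R\setminus\Gamma$: using the vanishing of the total field $Hg + u^{s}_g$ on $\Gamma$ and the Sommerfeld condition (\ref{1.3}), one derives an energy identity relating $\langle(-\mathrm{Re}\,F)g,g\rangle$ to $\|Hg\|^2_{L^{2}(\Gamma)}$, with the correct coefficient $1$ furnished by the specific normalization of $\gamma$ in two dimensions and the remainder absorbed into a finite-rank residue via the Fredholm properties of $\mathrm{Re}(\gamma S^{-1})$.

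For the converse, I would argue contrapositively: assume $\sigma\not\subset\Gamma$ and pick $x_0\in\sigma$ with an open disk $V$ around $x_0$ disjoint from $\overline{\Gamma}$. Since $\Gamma$ is a smooth nonintersecting open arc, $\mathbb{R}^{2}\setminus\Gamma$ is connected, and a Runge-type approximation for Herglotz waves is available: one first builds a Helmholtz solution on $\mathbb{R}^{2}\setminus\Gamma$ that concentrates near $x_0$ but has small $H^{1/2}(\Gamma)$ trace, then approximates it by Herglotz waves to produce a sequence $\{g_n\}\subset L^{2}(\mathbb{S}^1)$ with $\|Hg_n\|_{L^{2}(V\cap\sigma)}\to\infty$ while $\|Hg_n\|_{H^{1/2}(\Gamma)}$ stays bounded. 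The factorization then bounds $\langle(-\mathrm{Re}\,F)g_n,g_n\rangle$ uniformly, whereas $\langle H^{*}_\sigma H_\sigma g_n,g_n\rangle\to\infty$, so a Gram--Schmidt extraction produces an infinite-dimensional subspace on which $-\mathrm{Re}\,F - H^{*}_\sigma H_\sigma$ is negative, contradicting the finite-rank defect hypothesis.

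The main technical obstacle I anticipate is the Runge-approximation step in the converse direction: one must construct the localized Helmholtz solution on the noncompact set $\mathbb{R}^{2}\setminus\Gamma$ with prescribed concentration behaviour at $x_0$ and simultaneously small trace on $\Gamma$, and then approximate it by entire Herglotz waves uniformly in the $H^{1/2}$ topology on $\Gamma$. This combines unique continuation across the arc (valid because $\Gamma$ is smooth and nonintersecting) with a quantitative Runge-type theorem for Helmholtz solutions in $\mathbb{R}^{2}\setminus\Gamma$; the delicate point is that $\Gamma$ is a lower-dimensional set, whereas standard Runge approximation is phrased in interior Sobolev norms on open subsets.
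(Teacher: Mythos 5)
Your forward direction is essentially the paper's argument: factorize $F$ through the single-layer operator, reduce $H_\sigma^*H_\sigma$ to $H_\Gamma^*H_\Gamma$ by restriction, and absorb everything into a coercive-plus-compact middle operator. One remark: the fuss about obtaining ``the correct coefficient $1$'' from the normalization $\gamma$ is unnecessary. In the paper's factorization $-\mathrm{Re}F-H_\sigma^*H_\sigma=G\bigl[S_i+\mathrm{Re}(S-S_i)-SJ^*R^*RJS^*\bigr]G^*$, the term coming from $H_\sigma^*H_\sigma$ is compact as a map $\tilde H^{-1/2}(\Gamma)\to H^{1/2}(\Gamma)$ (because the embedding $J:H^{1/2}(\Gamma)\to L^2(\Gamma)$ is compact), so it is swallowed by the compact perturbation of the coercive operator $S_i$ regardless of any multiplicative constant; no energy identity with a sharp constant is needed.

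The converse direction, however, has a genuine gap, and it sits exactly where you flag your ``main technical obstacle.'' You propose to build localized Herglotz waves by a Runge-type approximation on $\mathbb{R}^2\setminus\Gamma$, but (i) that construction is never carried out, and the lower-dimensionality of $\Gamma$ that you identify as the difficulty is precisely why the paper does not attempt it; and (ii) even granting the sequence $g_n$, your final step does not close. Having $\langle(-\mathrm{Re}F-H_\sigma^*H_\sigma)g_n,g_n\rangle\to-\infty$ along a sequence does not yield an infinite-dimensional subspace on which the quadratic form is negative (cross terms are uncontrolled), and you never arrange the $g_n$ to avoid the finite-dimensional exceptional subspace $V$ permitted by $\leq_{\mathrm{fin}}$. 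The paper replaces the entire Runge step by a short abstract argument: it proves $\dim\mathrm{Ran}(H_{\sigma_0}^*)=\infty$ and $\mathrm{Ran}(G)\cap\mathrm{Ran}(H_{\sigma_0}^*)=\{0\}$ (the latter by gluing the two radiating fields across $\Gamma\cup\sigma_0$ via Rellich's lemma and unique continuation, forcing an entire radiating solution to vanish), then invokes the dimension lemma to conclude $\mathrm{Ran}(H_{\sigma_0}^*)\not\subseteq\mathrm{Ran}(G)+V$ for \emph{any} finite-dimensional $V$, and the range-inclusion characterization of norm domination to manufacture a sequence with $\|H_{\sigma_0}v_m\|\to\infty$ while $\|G^*v_m\|+\|P_Vv_m\|\to0$; projecting onto $V^\perp$ then contradicts the assumed inequality directly, with no need for a negative-definite infinite-dimensional subspace. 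To repair your proof you would either have to supply the quantitative Runge approximation with control of the trace on $\Gamma$ \emph{and} near-orthogonality to $V$, or adopt the range-intersection argument, which is both shorter and avoids the approximation theory entirely.
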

\begin{thm}
Let $B \subset \mathbb{R}^2$ be a bounded open set. Then,
\begin{equation}
\Gamma \subset B \ \ \ \  \Longleftrightarrow \ \ \ \ -\mathrm{Re}F \leq_{\mathrm{fin}} \tilde{H}^{*}_{\partial B}\tilde{H}_{\partial B}, \label{1.9}
\end{equation}
where $\tilde{H}_{\partial B}:L^{2}(\mathbb{S}^{1}) \to H^{1/2}(\partial B)$ is given by
\begin{equation}
\tilde{H}_{\partial B}g(x):=\int_{\mathbb{S}^{1}}\mathrm{e}^{ik\theta \cdot x}g(\theta)ds(\theta), \ x \in \partial B. \label{1.10}
\end{equation}
\end{thm}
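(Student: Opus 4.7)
The plan is to derive Theorem~1.2 by combining a symmetric factorization of $-\mathrm{Re}\,F$ with an interior solution operator on the artificial domain $B$, and then to use localized potentials to settle the converse. First, I would establish, presumably as a prerequisite also used for Theorem~1.1, a factorization of the form
\[
-\mathrm{Re}\,F \;=\; H_\Gamma^{*}\, M_\Gamma\, H_\Gamma,
\]
where $H_\Gamma \colon L^2(\mathbb{S}^1)\to H^{1/2}(\Gamma)$ is the Herglotz operator with target the trace space on $\Gamma$, and $M_\Gamma \colon H^{1/2}(\Gamma)\to H^{-1/2}(\Gamma)$ is bounded and self-adjoint with a compact negative part. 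This comes from writing $F = -G_\Gamma H_\Gamma$, with $G_\Gamma$ the data-to-pattern map sending a Dirichlet datum on $\Gamma$ to the associated far field, and then applying Green's identities to the scattered field as in the sound-soft obstacle case (the Kirsch--Grinberg-type factorization).

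For the direction $\Gamma \subset B \Rightarrow -\mathrm{Re}\,F \leq_{\mathrm{fin}} \tilde H_{\partial B}^{*}\tilde H_{\partial B}$, I would use the fact that every Herglotz wave $v_g = H_\Gamma g$ is an entire solution of $\Delta v + k^2 v = 0$. Hence its restriction to $\Gamma\subset B$ is the trace of the (uniquely determined, up to a finite-dimensional defect in the exceptional case of a Dirichlet eigenvalue of $-\Delta$ on $B$) solution of the interior Dirichlet problem on $B$ with boundary data $\tilde H_{\partial B} g$. Denoting this interior solution map by $T_B \colon H^{1/2}(\partial B)\to H^{1/2}(\Gamma)$, one has $H_\Gamma = T_B\,\tilde H_{\partial B}$ modulo finite rank, so that
\[
\langle -\mathrm{Re}\,F\,g,g\rangle \;=\; \langle M_\Gamma\, T_B\tilde H_{\partial B}g,\,T_B\tilde H_{\partial B}g\rangle \;\leq\; c\,\|\tilde H_{\partial B}g\|_{H^{1/2}(\partial B)}^{2},
\]
after absorbing the compact part of $M_\Gamma$, which contributes only finitely many negative eigenvalues to $\tilde H_{\partial B}^{*}\tilde H_{\partial B}-(-\mathrm{Re}\,F)$. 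The multiplicative constant $c$ is harmless because $\leq_{\mathrm{fin}}$ is preserved under scaling.

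For the converse $\Gamma \not\subset B \Rightarrow$ the inequality fails, I would invoke the localized-potentials / Runge approximation technique. Fix $x_0 \in \Gamma\setminus\overline{B}$. Since $x_0\notin\overline{B}$, one can separate $\{x_0\}$ from $\overline{B}$ by a smooth curve and approximate the singular fundamental solution $\Phi(\cdot,x_0)$ on $\overline{B}$ by Herglotz waves $v_{g_n}$, so that $\|\tilde H_{\partial B}g_n\|_{H^{1/2}(\partial B)}$ remains controlled while $\|v_{g_n}\|_{H^{1/2}(\Gamma)}\to\infty$ due to the singularity concentrating near $x_0$. Iterating on a shrinking family of patches around $x_0$ one produces, for each $N\in\mathbb{N}$, an $N$-dimensional subspace $V_N\subset L^2(\mathbb{S}^1)$ on which
\[
\|\tilde H_{\partial B}g\|_{H^{1/2}(\partial B)}^{2}-\langle -\mathrm{Re}\,F\,g,g\rangle \;<\;0,
\]
so that $\tilde H_{\partial B}^{*}\tilde H_{\partial B}-(-\mathrm{Re}\,F)$ has an $N$-dimensional negative subspace for every $N$, contradicting finiteness of its negative spectrum.

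The main obstacle is the localized-potentials step. Unlike in Theorem~1.1, the dominating operator here is measured in the $H^{1/2}(\partial B)$ norm on the closed curve $\partial B$, so the approximating Herglotz waves must be arranged to be small in this stronger trace norm while their traces on $\Gamma\setminus\overline{B}$ blow up in $H^{1/2}$; achieving this simultaneously requires interior elliptic regularity on a Runge domain separating $x_0$ from $\partial B$, combined with a careful trace estimate to match the $H^{1/2}$ topology on both sides. A secondary but controllable technicality is the finite-dimensional defect caused by Dirichlet eigenvalues of $-\Delta$ on $B$, which is precisely what $\leq_{\mathrm{fin}}$ is built to tolerate.
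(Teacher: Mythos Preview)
Your overall architecture---factor through $\partial B$ for the forward direction, use localized potentials for the converse---is close to the paper's, but the forward implication as written has a real gap. The assertion that ``the multiplicative constant $c$ is harmless because $\leq_{\mathrm{fin}}$ is preserved under scaling'' is false. For a counterexample take an orthonormal basis $(e_n)$ and set $Ae_n=\tfrac{2}{n}e_n$, $Be_n=\tfrac{1}{n}e_n$; then $A\le 2B$ everywhere, yet $B-A$ has infinitely many negative eigenvalues, so $A\not\leq_{\mathrm{fin}}B$. Hence from $\langle -\mathrm{Re}F\,g,g\rangle\le c\,\|\tilde H_{\partial B}g\|_{H^{1/2}}^{2}$ on a cofinite subspace you cannot conclude $-\mathrm{Re}F\leq_{\mathrm{fin}}\tilde H_{\partial B}^{*}\tilde H_{\partial B}$.

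The fix is to exploit not just that $M_\Gamma$ has compact negative part but that your interior solution operator $T_B\colon H^{1/2}(\partial B)\to H^{1/2}(\Gamma)$ is itself \emph{compact} (interior elliptic regularity: the Dirichlet solution is smooth on $\Gamma\Subset B$). Then $T_B^{*}M_\Gamma T_B$ is compact on $H^{1/2}(\partial B)$, and since $H_\Gamma=T_B\tilde H_{\partial B}$ (modulo the finite-dimensional eigenvalue defect) one has
\[
\tilde H_{\partial B}^{*}\tilde H_{\partial B}+\mathrm{Re}F \;=\; \tilde H_{\partial B}^{*}\bigl[I - T_B^{*}M_\Gamma T_B\bigr]\tilde H_{\partial B}\quad\text{mod finite rank},
\]
whose middle factor is identity-plus-compact and hence nonnegative on a cofinite subspace; this removes the constant entirely. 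The paper reaches the same conclusion by the dual construction: it introduces the \emph{exterior} trace $T\colon H^{1/2}(\Gamma)\to H^{1/2}(\partial B)$, $Tf:=v|_{\partial B}$ with $v$ the radiating solution with $v|_\Gamma=f$, observes $G=G_{\partial B}T$ with $T$ compact, and factorizes $\tilde H_{\partial B}^{*}\tilde H_{\partial B}+\mathrm{Re}F$ through $G_{\partial B}$ so that the middle is coercive-plus-compact. For the converse the paper does not build Runge approximants explicitly; it selects a subarc $\Gamma_0\subset\Gamma\setminus\overline B$, proves $\mathrm{Ran}(H_{\Gamma_0}^{*})\cap\mathrm{Ran}(G_{\partial B})=\{0\}$ with $\dim\mathrm{Ran}(H_{\Gamma_0}^{*})=\infty$, and then uses the range-inclusion criterion (Lemma~2.6) together with Lemma~2.7 to contradict any cofinite estimate---your localized-potentials route is equivalent in content once you translate blow-up of $\|H_\Gamma g_n\|$ into blow-up of $\langle -\mathrm{Re}F\,g_n,g_n\rangle$ via the coercive-plus-compact structure of $M_\Gamma$.
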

Theorem 1.1 determines whether an artificial open arc $\sigma$ is contained in $\Gamma$ or not. While, Theorem 1.2 determines an artificial domain $B$ contain $\Gamma$. In two theorems we can understand $\Gamma$ from the inside and outside.
\par
This paper is organized as follows. In Section 2, we give a rigorous definition of the above inequality.  Furthermore, we recall the properties of the far field operator and technical lemmas which are useful to prove main results. In Section 3 and 4, we prove Theorem 1.1 and 1.2 respectively. In Section 5, we give numerical examples based on Theorem 1.1.
\section{Preliminary}
First, we give a rigorous definition of the inequality in Theorems 1.1 and 1.2. 
\begin{dfn}
Let $A, B:X \to X$ be self-adjoint compact linear operators on a Hilbert space $X$. We write
\begin{equation}
 A\leq_{\mathrm{fin}} B,\label{2.1}
\end{equation}
if $B-A$ has only finitely many negative eigenvalues.
\end{dfn}
The following lemma was shown in Corollary 3.3 of \cite{B. Harrach and V. Pohjola and M. Salo2}.
\begin{lem}
Let $A, B:X \to X$ be self-adjoint compact linear operators on a Hilbert space $X$ with an inner product $\langle \cdot, \cdot \rangle$. Then, the following statements are equivalent:
\begin{description}
\item[(a)]
$A\leq_{\mathrm{fin}} B$
\item[(b)]
There exists a finite dimensional subspace $V$ in $X$ such that
\begin{equation}
\langle (B-A)v, v \rangle \geq0,\label{2.2}
\end{equation}
for all $v \in V^{\bot}$.
\end{description}
\end{lem}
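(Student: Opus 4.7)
The plan is to set $C := B - A$, which is self-adjoint and compact on $X$, and then apply the spectral theorem: write $C$ in terms of an orthonormal system of eigenvectors $\{e_n\}$ with real eigenvalues $\{\lambda_n\}$ whose only possible accumulation point is $0$. In this language, condition (a) says that only finitely many $\lambda_n$ are negative, while (b) says that $\langle Cv,v\rangle\geq 0$ outside of some finite-dimensional subspace.

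For the direction (a) $\Rightarrow$ (b), I would let $\lambda_1,\dots,\lambda_N$ be the (finitely many) negative eigenvalues of $C$ with corresponding orthonormal eigenvectors $e_1,\dots,e_N$, and take
\begin{equation*}
V := \mathrm{span}(e_1,\dots,e_N).
\end{equation*}
For $v \in V^{\bot}$, the coefficients $\langle v,e_j\rangle$ vanish for $j\leq N$, so the spectral expansion gives $\langle Cv,v\rangle=\sum_{n>N}\lambda_n|\langle v,e_n\rangle|^{2}\geq 0$ since all remaining eigenvalues are non-negative. (The kernel of $C$ contributes zero and causes no trouble.)

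For the converse (b) $\Rightarrow$ (a), I would argue by contradiction: assume $C$ has infinitely many negative eigenvalues $\mu_1,\mu_2,\ldots<0$ with orthonormal eigenvectors $f_1,f_2,\ldots$, and set $W:=\overline{\mathrm{span}}(f_k)_{k\in\mathbb{N}}$, an infinite-dimensional closed subspace. Since $V$ is finite-dimensional, the orthogonal projection $P_V$ restricted to $W$ has finite-rank image, hence its kernel in $W$, namely $W\cap V^{\bot}$, is infinite-dimensional and in particular nontrivial. Pick any nonzero $w=\sum_k c_k f_k\in W\cap V^{\bot}$. Then
\begin{equation*}
\langle Cw,w\rangle=\sum_{k}\mu_k|c_k|^{2}<0,
\end{equation*}
contradicting (b). Hence $C$ can have only finitely many negative eigenvalues.

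The only subtle point in the argument is the dimension count in the reverse direction — making sure that $W\cap V^{\bot}$ is nontrivial — but this is immediate once one observes that a finite-rank projection cannot be injective on an infinite-dimensional subspace. The rest is a direct application of the spectral theorem for self-adjoint compact operators.
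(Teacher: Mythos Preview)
Your argument is correct. Setting $C=B-A$ and invoking the spectral theorem for compact self-adjoint operators is exactly the natural route: for (a)$\Rightarrow$(b) you take $V$ to be the span of the eigenvectors for the finitely many negative eigenvalues, and for (b)$\Rightarrow$(a) the dimension count showing $W\cap V^{\bot}\neq\{0\}$ is sound (a finite-rank map cannot be injective on an infinite-dimensional space). The computation $\langle Cw,w\rangle=\sum_k\mu_k|c_k|^2<0$ is justified since $\sum_k|c_k|^2<\infty$ and the $\mu_k$ are bounded.

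As for comparison with the paper: the paper does not actually prove this lemma. It merely states the result and cites Corollary~3.3 of Harrach--Pohjola--Salo (\emph{Monotonicity and local uniqueness for the Helmholtz equation}). Your write-up therefore supplies a self-contained spectral-theoretic proof where the paper defers to the literature; this is a perfectly standard argument and matches what one finds in that reference.
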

Secondly, we define several operators in order to mention properties of the far field operator $F$. The data-to-pattern operator $G:H^{1/2}(\Gamma) \to L^{2}(\mathbb{S}^{1})$ is defined by
\begin{equation}
Gf:=v^{\infty},
\label{2.3}
\end{equation}
where $v^{\infty}$ is the far field pattern of a radiating solution $v$ (that is, $v$ satisfies the Sommerfeld radiation condition) such that   
\begin{equation}
\Delta v+k^2v=0 \ \mathrm{in} \ \mathbb{R}^2\setminus  \Gamma, \label{2.4}
\end{equation}
\begin{equation}
v=f \ \mathrm{on} \ \Gamma. \label{2.5}
\end{equation} 
The following lemma was given by the same argument in Lemma 1.13 of \cite{Kirsch and Grinberg}.
\begin{lem}
The data-to-pattern operator $G$ is compact and injective.
\end{lem}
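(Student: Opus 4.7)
The plan is to prove compactness of $G$ by factoring it through an auxiliary smoothing step, and to prove injectivity using Rellich's lemma together with unique continuation.

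For compactness, fix a smooth closed curve $C$ that strictly encloses $\overline{\Omega}$, so that $\Gamma \subset \partial\Omega$ lies at positive distance from $C$. The plan is to factor $G = G_2 \circ G_1$, where $G_1 : H^{1/2}(\Gamma) \to L^2(C) \times L^2(C)$ sends $f$ to the Cauchy data $(v|_C, \partial_\nu v|_C)$ of the radiating solution of \eqref{2.4}--\eqref{2.5}, and $G_2$ sends Cauchy data on $C$ to the far field pattern via the Green representation
\[
v^\infty(\hat{x}) = \gamma \int_C \left\{ v(y)\,\partial_{\nu(y)} e^{-ik\hat{x}\cdot y} - e^{-ik\hat{x}\cdot y}\,\partial_\nu v(y) \right\}ds(y),
\]
where $\gamma$ is the explicit constant coming from the two-dimensional asymptotics of the fundamental solution. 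The operator $G_1$ is bounded, since well-posedness of the forward problem yields $f \mapsto v \in H^1_{loc}(\mathbb{R}^2 \setminus \Gamma)$ continuously, and interior elliptic regularity on a neighbourhood of $C$ (where $v$ solves the homogeneous Helmholtz equation and is therefore real-analytic) turns this into a bound on the Cauchy data. The operator $G_2$ has kernels that are real-analytic in $\hat{x}$, so its range consists of real-analytic functions on $\mathbb{S}^1$ with exponentially decaying Fourier coefficients, which gives compactness into $L^2(\mathbb{S}^1)$. Composition of a bounded and a compact operator is compact.

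For injectivity, suppose $Gf = 0$, so that $v^\infty \equiv 0$ on $\mathbb{S}^1$. Rellich's lemma forces $v \equiv 0$ on the unbounded component of $\mathbb{R}^2 \setminus \Gamma$. Since $\Gamma$ is a simple open arc, its complement in $\mathbb{R}^2$ is connected, so the unique continuation principle for the Helmholtz equation extends this to $v \equiv 0$ on all of $\mathbb{R}^2 \setminus \Gamma$. Taking the trace on $\Gamma$ yields $f = v|_\Gamma = 0$.

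The main technical obstacle I anticipate is the smoothing step in the compactness argument: the solution $v$ has only $H^1_{loc}$ regularity near $\Gamma$, so one cannot hope for global Sobolev smoothing. This is circumvented by the fact that the far field depends on $v$ only through its behaviour in a region strictly separated from $\Gamma$. Choosing the auxiliary curve $C$ outside $\overline{\Omega}$ lets interior elliptic regularity upgrade the merely $H^1_{loc}$ dependence $f \mapsto v$ to a bound on $v|_C$ in any desired Sobolev norm, after which composition with the analytic-kernel operator $G_2$ delivers the required compactness.
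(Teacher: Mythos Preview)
Your argument is correct and is precisely the standard one: the paper does not actually supply its own proof here but simply refers to Lemma~1.13 of Kirsch--Grinberg, whose proof (for closed obstacles) proceeds exactly as you do---compactness via the analytic-kernel far-field representation on an auxiliary curve away from the scatterer, and injectivity via Rellich's lemma plus unique continuation. The only adaptation needed for the open-arc case is your observation that $\mathbb{R}^2\setminus\Gamma$ is connected, which you handle correctly.
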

We define the single layer boundary operator $S:\tilde{H}^{-1/2}(\Gamma) \to H^{1/2}(\Gamma)$ by
\begin{equation}
S\varphi(x):=\int_{\Gamma} \varphi(y)\Phi(x,y)ds(y), \ x \in \Gamma, \label{2.6}
\end{equation}
where $\Phi(x,y)$ denotes the fundamental solution to Helmholtz equation in $\mathbb{R}^2$, i.e., 
\begin{equation}
\Phi(x,y):= \displaystyle \frac{i}{4}H^{(1)}_0(k|x-y|), \ x \neq y. \label{2.7}
\end{equation}
Here, we denote by
\begin{equation}
H^{1/2}(\Gamma):= \{u\bigl|_{\Gamma} : u \in H^{1/2}(\partial\Omega) \},\label{2.8}
\end{equation}
\begin{equation}
\tilde{H}^{1/2}(\Gamma):= \{u \in H^{1/2}(\partial \Omega) : supp(u) \subset  \overline{\Gamma} \}, \label{2.9}
\end{equation}
and $H^{-1/2}(\Gamma)$ and $\tilde{H}^{-1/2}(\Gamma)$ the dual spaces of $\tilde{H}^{1/2}(\Gamma)$ and $H^{1/2}(\Gamma)$ respectively. Then, we have the following inclusion relation:
\begin{equation}
\tilde{H}^{1/2}(\Gamma) \subset H^{1/2}(\Gamma) \subset L^{2}(\Gamma) \subset \tilde{H}^{-1/2}(\Gamma)\subset H^{-1/2}(\Gamma).\label{2.10}
\end{equation}
For these details, we refer to \cite{McLean}. The following two Lemmas was shown in Section 3 of \cite{Kirsch and Ritter}.
\begin{lem}
\begin{description}
\item[(a)] $S$ is an isomorphism from $\tilde{H}^{-1/2}(\Gamma)$ onto $H^{1/2}(\Gamma)$.
\item[(b)] Let $S_{i}$ be the boundary integral operator $(\ref{2.6})$ corresponding to the wave number $k=i$. The operator $S_{i}$ is self-adjoint and coercive, i.e, there exists $c_0>0$ such that
\begin{equation}
\langle \varphi, S_{i} \varphi \rangle \geq c_0 \left\| \varphi \right\|_{\tilde{H}^{-1/2}(\Gamma)}^2 \ for \ all \ \varphi \in \tilde{H}^{-1/2}(\Gamma), \label{2.11}
\end{equation}
where $\langle \cdot, \cdot \rangle$ denotes the duality pairing in $\langle \tilde{H}^{-1/2}(\Gamma), H^{1/2}(\Gamma) \rangle$.
\item[(c)] $S-S_{i}$ is compact.
\item[(d)] There exists a self-adjoint and positive square root $S^{1/2}_{i}:L^{2}(\Gamma) \to L^{2}(\Gamma)$ of $S_{i}$ which can be extended such that $S^{1/2}_{i}:\tilde{H}^{-1/2}(\Gamma) \to L^{2}(\Gamma)$ is an isomorphism and $S^{1/2\ *}_{i}S^{1/2}_{i}=S_{i}.$
\end{description}
\end{lem}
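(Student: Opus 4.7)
The plan is to treat the four parts in the order (b), (c), (a), (d), since (b) and (c) supply the ingredients for the Fredholm argument behind (a), and (d) is a spectral consequence of (b). Throughout I would think of $S\varphi$ as the Dirichlet trace on $\Gamma$ of the single layer potential
\begin{equation*}
v(x):=\int_{\Gamma}\varphi(y)\Phi(x,y)\,ds(y),\qquad x\in\mathbb{R}^{2}\setminus\Gamma,
\end{equation*}
which is a radiating weak solution of $\Delta v+k^{2}v=0$ in $\mathbb{R}^{2}\setminus\Gamma$ whose jump in the normal derivative across $\Gamma$ is $-\varphi$, and similarly $S_{i}\varphi$ corresponds to the potential $v_{i}$ solving $\Delta v_{i}-v_{i}=0$.

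For (b), self-adjointness of $S_{i}$ follows from the fact that the kernel $\Phi_{i}(x,y)=\frac{i}{4}H_{0}^{(1)}(i|x-y|)=\frac{1}{2\pi}K_{0}(|x-y|)$ is real and symmetric in $x,y$. For coercivity I would apply Green's first identity to $v_{i}$ in $B_{R}\setminus\overline{\Gamma}$, use the exponential decay of $v_{i}$ as $R\to\infty$ to throw the boundary-at-infinity term away, and use the jump relation to obtain
\begin{equation*}
\langle\varphi,S_{i}\varphi\rangle=\int_{\mathbb{R}^{2}\setminus\Gamma}\bigl(|\nabla v_{i}|^{2}+|v_{i}|^{2}\bigr)dx=\|v_{i}\|_{H^{1}(\mathbb{R}^{2}\setminus\Gamma)}^{2}.
\end{equation*}
A trace estimate for the exterior problem of $-\Delta+1$, together with the identification of $\varphi$ as the jump $[\partial_{\nu}v_{i}]$, bounds $\|\varphi\|_{\tilde{H}^{-1/2}(\Gamma)}$ by $\|v_{i}\|_{H^{1}}$, yielding the coercivity constant $c_{0}>0$.

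For (c) I would use that near the diagonal both $\Phi$ and $\Phi_{i}$ carry the same logarithmic singularity $-\frac{1}{2\pi}\log|x-y|$, so the difference $\Phi-\Phi_{i}$ extends to a smooth function on a neighborhood of $\overline{\Gamma}\times\overline{\Gamma}$; the resulting integral operator is thus smoothing and, composed with the compact embedding into $H^{1/2}(\Gamma)$, compact. For (a), part (b) plus Lax--Milgram shows $S_{i}:\tilde{H}^{-1/2}(\Gamma)\to H^{1/2}(\Gamma)$ is an isomorphism, and by (c) the operator $S$ is a compact perturbation of an isomorphism between these spaces, hence Fredholm of index zero. Injectivity is then the only remaining point: if $S\varphi=0$ then $v$ is a radiating solution in $\mathbb{R}^{2}\setminus\Gamma$ with zero Dirichlet data on $\Gamma$, so $v\equiv 0$ by the uniqueness for the exterior Dirichlet problem on the arc, and the jump relation yields $\varphi=0$. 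Finally (d) is pure spectral theory: since $S_{i}:L^{2}(\Gamma)\to L^{2}(\Gamma)$ is compact, self-adjoint and positive (its positivity in $L^{2}$ follows from (b) combined with the embeddings in (2.10)), the continuous functional calculus produces the self-adjoint square root $S_{i}^{1/2}$ with $S_{i}^{1/2\,*}S_{i}^{1/2}=S_{i}$, and coercivity of $S_{i}$ in the $\tilde{H}^{-1/2}$-norm extends this root by density/duality to an isomorphism $\tilde{H}^{-1/2}(\Gamma)\to L^{2}(\Gamma)$.

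I expect the main technical obstacle to be part (b), specifically making the energy identity rigorous for densities $\varphi\in\tilde{H}^{-1/2}(\Gamma)$ rather than merely $L^{2}(\Gamma)$: one must interpret the jump relation and the duality pairing $\langle\varphi,S_{i}\varphi\rangle$ in the pairing of $\tilde{H}^{-1/2}(\Gamma)$ with $H^{1/2}(\Gamma)$ exactly as in (2.10), and the coercivity bound has to recover the correct dual norm on the open arc $\Gamma$, where the distinction between $\tilde{H}^{\pm 1/2}(\Gamma)$ and $H^{\pm 1/2}(\Gamma)$ is genuine and cannot be blurred.
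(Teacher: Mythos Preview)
The paper does not actually prove this lemma: immediately before the statement it writes ``The following two Lemmas was shown in Section 3 of \cite{Kirsch and Ritter}'' and gives no argument of its own. So there is nothing to compare at the level of strategy --- the authors simply quote the result.

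Your sketch is correct and is essentially the standard argument one finds in the cited source (and in McLean \cite{McLean} or Kirsch--Grinberg \cite{Kirsch and Grinberg} for the closed-curve analogue): coercivity of $S_{i}$ via the $H^{1}$ energy identity for the modified Helmholtz single layer potential, compactness of $S-S_{i}$ from the cancellation of the logarithmic singularities, Fredholm alternative plus uniqueness for the crack Dirichlet problem to get (a), and functional calculus for the square root in (d). The one place where you are right to flag a delicacy is the open-arc setting: the jump relation and the identification of $\varphi$ with $[\partial_{\nu}v_{i}]$ must be read in the pairing $\langle\tilde{H}^{-1/2}(\Gamma),H^{1/2}(\Gamma)\rangle$, and the extension in (d) relies on the density of $L^{2}(\Gamma)$ in $\tilde{H}^{-1/2}(\Gamma)$ together with the coercivity bound $\|S_{i}^{1/2}\psi\|_{L^{2}}^{2}=\langle\psi,S_{i}\psi\rangle\geq c_{0}\|\psi\|_{\tilde{H}^{-1/2}}^{2}$ to show the extended map is bounded below (hence injective with closed range) and has dense range. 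None of this is a gap in your plan, just the point where the open-arc spaces genuinely enter; Kirsch--Ritter handle exactly this.
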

\begin{lem}
The far field operator $F$ has the following factorization:
\begin{equation}
F=-GS^{*}G^{*}.\label{2.12}
\end{equation}
where $G^{*}:L^{2}(\mathbb{S}^{1})\to \tilde{H}^{-1/2}(\Gamma)$ and $S^{*}:\tilde{H}^{-1/2}(\Gamma) \to H^{1/2}(\Gamma)$ are the adjoints of $G$ and $S$, respectively.
\end{lem}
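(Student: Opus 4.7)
The plan is to reduce the factorization to two elementary identities: $F = -G H_\Gamma$ and $H_\Gamma = S^* G^*$, where $H_\Gamma$ is the Herglotz operator from (\ref{1.8}) viewed as a bounded map $L^2(\mathbb{S}^1) \to H^{1/2}(\Gamma)$. For the first identity, observe that for each $\theta \in \mathbb{S}^1$ the scattered field $u^s(\cdot,\theta)$ is, by (\ref{1.2}), the unique radiating solution of the Helmholtz equation on $\mathbb{R}^2\setminus\Gamma$ with Dirichlet trace $-\mathrm{e}^{ik\theta\cdot x}|_\Gamma$, so by the definition (\ref{2.3}) of $G$,
\begin{equation*}
u^\infty(\hat{x},\theta) = -G\bigl(\mathrm{e}^{ik\theta\cdot (\cdot)}\big|_\Gamma\bigr)(\hat{x}).
\end{equation*}
Substituting into (\ref{1.6}) and interchanging $G$ with the $\theta$-integration (legitimate because $g\mapsto H_\Gamma g$ is bounded $L^2(\mathbb{S}^1)\to H^{1/2}(\Gamma)$, since $H_\Gamma g$ is the trace of the smooth entire Herglotz wave function) yields $Fg = -G(H_\Gamma g)$ for every $g\in L^2(\mathbb{S}^1)$.

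The second identity $H_\Gamma = S^*G^*$ is equivalent, by taking Banach-space adjoints, to $H_\Gamma^* = GS$ as operators $\tilde{H}^{-1/2}(\Gamma) \to L^2(\mathbb{S}^1)$. On one hand, a direct Fubini computation on the duality pairing defining the adjoint shows, for regular densities $\varphi$ (and then by density in $\tilde{H}^{-1/2}(\Gamma)$),
\begin{equation*}
(H_\Gamma^*\varphi)(\theta) = \int_\Gamma \mathrm{e}^{-ik\theta\cdot y}\varphi(y)\,ds(y).
\end{equation*}
On the other hand, $GS\varphi$ is by definition the far-field pattern of the radiating solution whose trace on $\Gamma$ equals $S\varphi$; by (\ref{2.6}) and uniqueness of the exterior Dirichlet problem (which uses Lemma 2.4(a)) that solution is precisely the single-layer potential $v(x)=\int_\Gamma\Phi(x,y)\varphi(y)\,ds(y)$, and the standard large-argument asymptotic of $H_0^{(1)}$ applied to (\ref{2.7}) produces exactly the same integral for the far-field coefficient in (\ref{1.5}). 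Hence $H_\Gamma^*=GS$, so $H_\Gamma=S^*G^*$, and combining with the first step gives $F = -GS^*G^*$.

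The main bookkeeping obstacle is the multiplicative constant $\mathrm{e}^{i\pi/4}/\sqrt{8\pi k}$ arising from the asymptotic of $\Phi$; under the convention fixed by (\ref{1.5}) it is absorbed consistently into the definitions of the far-field pattern and the data-to-pattern operator, so that the factorization appears in the clean form stated in the lemma. Everything else is a routine interchange of integrals together with the mapping properties of $S$ collected in Lemma 2.4.
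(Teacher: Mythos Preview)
The paper does not actually prove this lemma; it states that the result ``was shown in Section~3 of \cite{Kirsch and Ritter}'' and simply quotes the factorization. Your proposal supplies exactly the standard argument from that reference: split the factorization as $F=-GH_\Gamma$ (which follows immediately from the definitions of $F$ and $G$ together with linearity/superposition) and $H_\Gamma^{*}=GS$ (which follows because the single-layer potential with density $\varphi$ is the radiating solution with trace $S\varphi$, and its far field is the Herglotz adjoint applied to $\varphi$). This is correct and is precisely the route taken in the cited source; indeed the paper itself relies on the intermediate identity $\hat H^{*}=GS$ at the beginning of Section~3, so your decomposition matches how the authors use the result.

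One remark on your final paragraph: the multiplicative constant $e^{i\pi/4}/\sqrt{8\pi k}$ is \emph{not} automatically absorbed by the convention in (\ref{1.5}); with the definitions exactly as written in the paper one obtains $GS = c\,H_\Gamma^{*}$ with $c=e^{i\pi/4}/\sqrt{8\pi k}$, and hence $F = -|c|^{2}\,GS^{*}G^{*}$ rather than $F=-GS^{*}G^{*}$. The paper (and Kirsch--Ritter) suppress this harmless positive constant throughout, since it affects neither the sign of $\mathrm{Re}\,F$ nor any of the monotonicity inequalities, but your justification that it is ``absorbed consistently into the definitions'' slightly overstates the case. If you want the identity to hold on the nose, you should either include the constant explicitly or renormalize the far-field pattern.
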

Thirdly, we recall the following technical lemmas which will be useful to prove Theorems 1.1 and 1.2. We refer to Lemma 4.6 and 4.7 in \cite{B. Harrach and V. Pohjola and M. Salo2}.
\begin{lem}
Let $X$, $Y$, and $Z$ be Hilbert spaces, and let $A:X \to Y$ and $B:X \to Z$ be bounded linear operators. Then,
\begin{equation}
\exists C>0: \ \left\| Ax \right\|^2 \leq  C\left\| Bx \right\|^2 \ for \ all \ x \in X \ \ \ \ \Longleftrightarrow \ \ \ \ \mathrm{Ran}(A^{*})\subseteq \mathrm{Ran}(B^{*}).\label{2.13}
\end{equation}
\end{lem}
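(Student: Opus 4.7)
The statement is the classical Douglas range-inclusion / majorization lemma, and my plan is to prove each implication by a standard factorization argument.

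For the direction $(\Rightarrow)$, I would exploit the norm inequality to factor $A$ through $B$. Assuming $\|Ax\|^2\le C\|Bx\|^2$ for all $x\in X$, note first that $Bx=Bx'$ forces $B(x-x')=0$, and then the inequality forces $A(x-x')=0$. So the prescription $T_0(Bx):=Ax$ gives a well-defined linear map $T_0:\mathrm{Ran}(B)\to Y$ satisfying $\|T_0 y\|\le \sqrt{C}\,\|y\|$ for every $y\in\mathrm{Ran}(B)$. I would extend $T_0$ by continuity to $\overline{\mathrm{Ran}(B)}$ and by zero on its orthogonal complement $\overline{\mathrm{Ran}(B)}^{\bot}=\ker(B^{*})$, obtaining a bounded operator $T:Z\to Y$ with $TB=A$. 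Taking adjoints yields $A^{*}=B^{*}T^{*}$, and in particular $\mathrm{Ran}(A^{*})\subseteq \mathrm{Ran}(B^{*})$.

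For the reverse direction $(\Leftarrow)$, the plan is to use $\mathrm{Ran}(A^{*})\subseteq \mathrm{Ran}(B^{*})$ to construct a bounded $C:Y\to Z$ with $A^{*}=B^{*}C$; then $A=C^{*}B$ gives the norm majorization with constant $\|C\|^2$. For each $y\in Y$ the inclusion produces a $z\in Z$ with $B^{*}z=A^{*}y$, and among all such $z$ there is a unique element lying in $(\ker B^{*})^{\bot}=\overline{\mathrm{Ran}(B)}$; define $Cy$ to be this element. The key step is boundedness: I would apply the closed graph theorem. If $y_n\to y$ in $Y$ and $Cy_n\to z^{*}$ in $Z$, then applying $B^{*}$ (continuous) gives $B^{*}z^{*}=\lim B^{*}Cy_n=\lim A^{*}y_n=A^{*}y$, while $z^{*}\in(\ker B^{*})^{\bot}$ because this subspace is closed. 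By the uniqueness characterization, $Cy=z^{*}$, so $C$ has a closed graph and is therefore bounded.

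I expect the main obstacle to be the $(\Leftarrow)$ direction, specifically verifying that the pointwise-defined preimage selector $C$ is bounded: one cannot read off a norm estimate directly from the set-theoretic inclusion $\mathrm{Ran}(A^{*})\subseteq \mathrm{Ran}(B^{*})$, and one must invoke the closed graph theorem (which in turn relies on completeness of both Hilbert spaces). Once $C$ is produced, taking adjoints and estimating $\|C^{*}Bx\|\le\|C\|\,\|Bx\|$ is routine, and the direction $(\Rightarrow)$ is a straightforward continuous-extension argument.
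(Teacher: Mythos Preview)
Your argument is correct and is precisely the standard Douglas factorization proof of the range-inclusion/majorization equivalence. The only small omission is that you do not explicitly verify linearity of the selector $C$ in the $(\Leftarrow)$ direction before invoking the closed graph theorem, but this follows immediately from the uniqueness of the component in $(\ker B^{*})^{\bot}$ and is routine.

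As for comparison with the paper: there is nothing to compare. The paper does not supply its own proof of this lemma; it merely quotes the result and refers the reader to Lemma~4.6 of Harrach--Pohjola--Salo. Your write-up therefore goes beyond what the paper provides, and the approach you take is the classical one.
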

\begin{lem}
Let $X$, $Y$, $V \subset Z$ be subspaces of a vector space $Z$. If 
\begin{equation}
X\cap Y = \{ 0 \}, \ \ \ \ and \ \ \ \ X \subseteq Y+V,\label{2.14}
\end{equation}
then $\mathrm{dim}(X) \leq \mathrm{dim}(V)$.
\end{lem}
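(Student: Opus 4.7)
The statement is a purely linear-algebraic fact about subspaces of an abstract vector space, so the plan is to reduce it to a standard linear independence argument, with no analysis required. In particular, the conclusion $\dim(X)\le\dim(V)$ need only be proved in the finite-dimensional sense (if $\dim(X)=\infty$ we must exhibit arbitrarily large linearly independent subsets of $V$, which is handled uniformly by the argument below).

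The core idea is to use the decomposition $X\subseteq Y+V$ to lift a linearly independent family in $X$ to a linearly independent family in $V$, where the condition $X\cap Y=\{0\}$ is exactly what kills the ambiguity coming from $Y$. Concretely, I would proceed as follows. Fix any linearly independent vectors $x_1,\ldots,x_n\in X$. Using $X\subseteq Y+V$, for each $i$ choose a decomposition
\begin{equation*}
x_i = y_i + v_i, \qquad y_i\in Y,\ v_i\in V.
\end{equation*}
I claim $v_1,\ldots,v_n$ are linearly independent in $V$. Indeed, if $\sum_{i=1}^n c_i v_i = 0$ for scalars $c_i$, then
\begin{equation*}
\sum_{i=1}^n c_i x_i \;=\; \sum_{i=1}^n c_i y_i \;\in\; X\cap Y \;=\; \{0\},
\end{equation*}
since the left-hand side lies in $X$ and the right-hand side lies in $Y$. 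The linear independence of the $x_i$ then forces $c_1=\cdots=c_n=0$, proving the claim. Hence $V$ contains $n$ linearly independent vectors, so $\dim(V)\ge n$. Taking the supremum over all $n$ for which such an $x_1,\ldots,x_n\in X$ exists yields $\dim(X)\le\dim(V)$.

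There is no real obstacle here; the only point that requires any care is making sure the decomposition $x_i=y_i+v_i$ is chosen once and for all per $x_i$ (it need not be unique if $Y\cap V\neq\{0\}$, but for the argument any fixed choice suffices). An equivalent and slightly slicker phrasing is to consider the linear map $\pi\colon X\to (Y+V)/Y$, $x\mapsto x+Y$; its kernel is $X\cap Y=\{0\}$, so it is injective, giving $\dim(X)\le\dim((Y+V)/Y)\cong\dim(V/(V\cap Y))\le\dim(V)$. Either formulation fits in a few lines and does not use any property of the ambient vector space $Z$ beyond its linear structure.
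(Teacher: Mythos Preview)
Your argument is correct; both the direct linear-independence lifting and the quotient-map version are valid and standard. Note, however, that the paper does not actually prove this lemma: it merely cites it (together with the preceding Lemma~2.6) from Harrach--Pohjola--Salo, \emph{Monotonicity and local uniqueness for the Helmholtz equation}, so there is no in-paper proof to compare against. Your write-up would serve perfectly well as a self-contained replacement for that citation.
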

\section{Proof of Theorem 1.1}
In Section 3, we will show Theorem 1.1. Let $\sigma \subset \Gamma$. We denote by $R:L^{2}(\Gamma)\to L^{2}(\sigma)$ the restriction operator, $J:H^{1/2}(\Gamma)\to L^{2}(\Gamma)$ the compact embedding, and $H:L^{2}(\mathbb{S}^{1}) \to L^{2}(\Gamma)$, $\hat{H}:L^{2}(\mathbb{S}^{1}) \to H^{1/2}(\Gamma)$ the Herglotz operator, respectively. Then by these definitions and $\hat{H}^{*}=GS$, we have
\begin{eqnarray}
H_{\sigma}
&=&RH
\nonumber\\
&=&RJ\hat{H}
\nonumber\\
&=&RJS^{*}G^{*}.\label{3.1}
\end{eqnarray}
Using (\ref{3.1}) and Lemmas 2.4 and 2.5, $-\mathrm{Re}F-H^{*}_{\sigma}H_{\sigma}$ has the following factorization:
\begin{eqnarray}
-\mathrm{Re}F-H^{*}_{\sigma}H_{\sigma}
&=&G\bigl[\mathrm{Re}S- SJ^{*}R^{*}RJS^{*} \bigr]G^{*}
\nonumber\\
&=&G\bigl[S_{i}+\mathrm{Re}(S-S_{i})-SJ^{*}R^{*}RJS^{*} \bigr]G^{*}
\nonumber\\
&=&\bigl[GW^{*}\bigr]W^{*\ -1}\bigl[S_{i}+\mathrm{Re}(S-S_{i})-SJ^{*}R^{*}RJS^{*} \bigr]W^{-1} \bigl[GW^{*}\bigr]^{*}
\nonumber\\
&=&\bigl[GW^{*}\bigr]\bigl[I_{L^{2}(\Gamma)}+K \bigr]\bigl[GW^{*}\bigr]^{*},
\label{3.2}
\end{eqnarray}
where $W:=S^{1/2}_{i}: \tilde{H}^{-1/2}(\Gamma) \to L^{2}(\Gamma)$ is an extension of the square root of $S^{1/2}_{i}$, $K:=W^{*\ -1}\bigl[\mathrm{Re}(S-S_{i})-SJ^{*}R^{*}RJS^{*} \bigr]W^{-1}:L^{2}(\Gamma) \to L^{2}(\Gamma)$ is self-adjoint compact, and $I_{L^{2}(\Gamma)}$ is the identity operator on $L^{2}(\Gamma)$. Let $V$ be the sum of eigenspaces of $K$ associated to eigenvalues less than $-1/2$. Then, $V$ is a finite dimensional and
\begin{equation}
\langle (I_{L^{2}(\Gamma)}+K \bigr)v, v \rangle \geq0,\label{3.3}
\end{equation}
for all $v \in V^{\bot}$. Since for $g \in L^{2}(\mathbb{S}^{1})$
\begin{equation}
\bigl[GW^{*}\bigr]^{*}g \in V^{\bot} \ \ \ \  \Longleftrightarrow \ \ \ \ g \in [(GW^{*})V\bigr]^{\bot} ,\label{3.4}
\end{equation}
and $\mathrm{dim}[(GW^{*})V\bigr] \leq \mathrm{dim}(V)< \infty$, we have by (\ref{3.3}) and Lemma 2.2 that $H^{*}_{\sigma}H_{\sigma}\leq_{\mathrm{fin}} -\mathrm{Re}F$.
\par
Let now $\sigma \not\subset \Gamma$ and assume on the contrary $H^{*}_{\sigma}H_{\sigma}\leq_{\mathrm{fin}} -\mathrm{Re}F$, that is, by Lemma 2.2 there exists a finite dimensional subspace $V$ in $L^{2}(\mathbb{S}^{1})$ such that
\begin{equation}
\langle (-\mathrm{Re}F-H^{*}_{\sigma}H_{\sigma})v, v \rangle \geq 0,\label{3.5}
\end{equation}
for all $v \in V^{\bot}$. Since $\sigma \not\subset \Gamma$,  we can take a small open arc $\sigma_0 \subset \sigma$ such that $\sigma_0 \cap \Gamma= \emptyset$, which implies that for all $v \in V^{\bot}$ 
\begin{eqnarray}
\left\| H_{\sigma_0}v \right\|^{2}_{L^{2}(\sigma_0)}
&\leq&
\left\| H_{\sigma}v \right\|^{2}_{L^{2}(\sigma)}
\nonumber\\
&\leq&\langle (-\mathrm{Re}F)v, v \rangle_{L^{2}(\mathbb{S}^{1})}
\nonumber\\
&=&\langle (\mathrm{Re}S^{*})G^{*}v, G^{*}v \rangle
\nonumber\\
&\leq&\left\| \mathrm{Re}S^{*} \right\| \left\|G^{*}v \right\|^{2}.
\label{3.6}
\end{eqnarray}
Before showing contradiction with (\ref{3.6}), we will show the following lemma.
\begin{lem}
\begin{description}
\item[(a)]
$\mathrm{dim}(\mathrm{Ran}(H_{\sigma_0}^{*}))=\infty$
\item[(b)]
$\mathrm{Ran}(G)\cap \mathrm{Ran}(H_{\sigma_0}^{*})=\{ 0 \}$.
\end{description}
\end{lem}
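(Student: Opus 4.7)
The strategy is to identify $H_{\sigma_0}^{*}\psi$, for $\psi \in L^{2}(\sigma_0)$, with a fixed nonzero constant multiple of the far field pattern of the single-layer potential
\[
v_{\psi}(x) \;:=\; \int_{\sigma_0} \Phi(x,y)\,\psi(y)\,ds(y), \qquad x\in \mathbb{R}^{2}\setminus\overline{\sigma_0}.
\]
This identification follows by substituting the large-$|x|$ asymptotics of $\Phi$ (equivalently of $H_{0}^{(1)}$) into the defining integral of $H_{\sigma_0}^{*}$. With this reformulation, both parts reduce to classical uniqueness arguments combining Rellich's lemma with the jump relations of the single-layer potential across $\sigma_0$.

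For part (a), I would show that $H_{\sigma_0}^{*}$ is injective; since $L^{2}(\sigma_0)$ is infinite dimensional, this immediately gives $\dim \mathrm{Ran}(H_{\sigma_0}^{*})=\infty$. If $H_{\sigma_0}^{*}\psi=0$, then $v_{\psi}$ has vanishing far field pattern, so Rellich's lemma together with unique continuation forces $v_{\psi}\equiv 0$ on $\mathbb{R}^{2}\setminus\overline{\sigma_0}$. The standard jump relation then identifies $\psi$ with the jump of $\partial_{\nu}v_{\psi}$ across $\sigma_0$, which therefore vanishes, so $\psi=0$. Here the inclusion $L^{2}(\sigma_0)\subset \tilde{H}^{-1/2}(\sigma_0)$ (the analogue of (\ref{2.10})) ensures $v_{\psi}\in H^{1}_{loc}(\mathbb{R}^{2})$, to which the jump relations apply.

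For part (b), take $w^{\infty}\in \mathrm{Ran}(G)\cap \mathrm{Ran}(H_{\sigma_0}^{*})$ and write $w^{\infty}=Gf=v^{\infty}$, where $v$ is the radiating solution on $\mathbb{R}^{2}\setminus\Gamma$ with $v|_{\Gamma}=f$, and simultaneously $w^{\infty}=c\,v_{\psi}^{\infty}$ for some $\psi\in L^{2}(\sigma_0)$ and the constant $c$ from the first paragraph. Shrinking $\sigma_0$ inside $\sigma\setminus\Gamma$ if necessary, we may assume $\overline{\sigma_0}\cap\overline{\Gamma}=\emptyset$, so that $v$ is real analytic in a neighborhood of $\overline{\sigma_0}$. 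Applying Rellich's lemma to the radiating function $v-c\,v_{\psi}$ on $\mathbb{R}^{2}\setminus(\Gamma\cup\overline{\sigma_0})$ yields $v\equiv c\,v_{\psi}$ there. Since $v$ extends smoothly across $\sigma_0$, so does $v_{\psi}$; its normal-derivative jump $\psi$ must therefore vanish, forcing $v_{\psi}\equiv 0$ and hence $v\equiv 0$ on $\mathbb{R}^{2}\setminus(\Gamma\cup\overline{\sigma_0})$. Because $\Gamma$ is a proper open subarc of the closed curve $\partial\Omega$, $\mathbb{R}^{2}\setminus\Gamma$ is connected, so analytic continuation (or equivalently unique continuation applied once more) gives $v\equiv 0$ on $\mathbb{R}^{2}\setminus\Gamma$; taking boundary values, $f=0$ and thus $w^{\infty}=Gf=0$.

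The step I expect to be trickiest is verifying the jump relation for the single-layer potential on the open arc $\sigma_0$ with a merely $L^{2}$-density, together with the geometric bookkeeping needed to ensure $\overline{\sigma_0}\cap\overline{\Gamma}=\emptyset$ so that the analyticity of $v$ in a two-sided neighborhood of $\sigma_0$ is genuinely available; once these ingredients are in place, the Rellich--unique continuation--analytic continuation chain runs as sketched.
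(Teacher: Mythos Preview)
Your argument is correct but takes a different route from the paper's. For (a), the paper factors $H_{\sigma_0}^{*}=G_{\sigma_0}S_{\sigma_0}J_{\sigma_0}^{*}$ via the data-to-pattern operator, single-layer operator, and compact embedding associated to $\sigma_0$, and concludes from density of $\mathrm{Ran}(J_{\sigma_0}^{*})$ together with injectivity of $G_{\sigma_0}S_{\sigma_0}$; you instead prove injectivity of $H_{\sigma_0}^{*}$ directly from Rellich's lemma plus the normal-derivative jump of the single-layer potential, which is more self-contained but requires the jump relation on an open arc with an $L^{2}$ density. For (b), the paper notes $\mathrm{Ran}(H_{\sigma_0}^{*})\subset\mathrm{Ran}(G_{\sigma_0})$ and proves the stronger fact $\mathrm{Ran}(G)\cap\mathrm{Ran}(G_{\sigma_0})=\{0\}$ by a \emph{symmetric gluing}: the two radiating solutions $v_{\Gamma}$ and $v_{\sigma_0}$ coincide on $\mathbb{R}^{2}\setminus(\Gamma\cup\sigma_0)$ and patch to an entire radiating solution on $\mathbb{R}^{2}$, which must vanish. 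Your argument is asymmetric---smoothness of $v$ across $\sigma_0$ forces the density $\psi$ to vanish via the jump relation, whence $v\equiv 0$. The paper's gluing sidesteps the jump-relation technicality entirely, while your version makes the single-layer structure of $\mathrm{Ran}(H_{\sigma_0}^{*})$ explicit. One minor point: shrinking $\sigma_0$ mid-proof is not quite legitimate, since $\sigma_0$ is fixed in the lemma; this is harmless, however, because the stronger separation $\overline{\sigma_0}\cap\overline{\Gamma}=\emptyset$ can simply be arranged when $\sigma_0$ is first selected in the surrounding proof of Theorem~1.1.
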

\begin{proof}[{\bf Proof of Lemma 3.1}]
{\bf (a)} By the same argument in (\ref{3.1}) we have
\begin{equation}
H_{\sigma_0}=J_{\sigma_0}\hat{H}_{\sigma_0}=J_{\sigma_0}S_{\sigma_0}^{*}G_{\sigma_0}^{*},\label{3.7}
\end{equation}
where $G_{\sigma_0}:H^{1/2}(\sigma_0) \to L^{2}(\mathbb{S}^{1})$, $S_{\sigma_0}:\tilde{H}^{-1/2}(\sigma_0) \to H^{1/2}(\sigma_0)$, and $J_{\sigma_0}:H^{1/2}(\sigma_0)\to L^{2}(\sigma_0)$ are the data-to-pattern operator, the single layer boundary operator, and the compact embedding, respectively corresponding to $\sigma_0$. Since $H_{\sigma_0}^{*}=G_{\sigma_0}S_{\sigma_0}J^{*}_{\sigma_0}$, $\mathrm{Ran}(J^{*}_{\sigma_0})$ is dense, and  $G_{\sigma_0}S_{\sigma_0}$ is injective, we have $\mathrm{dim}(\mathrm{Ran}(H_{\sigma_0}^{*}))=\mathrm{dim}(\mathrm{Ran}(J_{\sigma_0}^{*}))=\infty$.
\par
{\bf (b)} By (\ref{3.7}), we have $\mathrm{Ran}(H_{\sigma_0}^{*})\subset \mathrm{Ran}(G_{\sigma_0})$. Let $h \in \mathrm{Ran}(G)\cap \mathrm{Ran}(G_{\sigma_0})$, i.e., $h=v_{\Gamma}^{\infty}=v_{\sigma_0}^{\infty}$ where $v_{\Gamma}^{\infty}$ and $v_{\sigma_0}^{\infty}$ are far field patterns associated to scatterers $\Gamma$ and $\sigma_0$ respectively. Then by Rellich lemma and unique continuation we have $v_{\Gamma}=v_{\sigma_0}\ \mathrm{in} \ \mathbb{R}^2\setminus  (\Gamma \cup \sigma_0)$. Hence, we can define $v \in H^{1}_{loc}(\mathbb{R}^2)$ by
\begin{eqnarray}
v:=\left\{ \begin{array}{ll}
v_{\Gamma}=v_{\sigma_0} & \quad \mbox{in $\mathbb{R}^2\setminus  (\Gamma \cup \sigma_0)$}  \\
v_{\Gamma} & \quad \mbox{on $\sigma_0$} \\
v_{\sigma_0} & \quad \mbox{on $\Gamma$} \\
\end{array} \right.\label{3.8}
\end{eqnarray}
and $v$ is a radiating solution to
\begin{equation}
\Delta v+k^2v=0 \ \mathrm{in} \ \mathbb{R}^2. \label{3.9}
\end{equation}
Thus $v=0 \ \mathrm{in} \ \mathbb{R}^2$, which implies that $h=0$.   
\end{proof}
By the above lemma and using Lemma 2.7, we get
\begin{equation}
\mathrm{Ran}(H^{*}_{\sigma_0}) \not\subseteq \mathrm{Ran}(G)+V= \mathrm{Ran}(G,\ P_{V}),\label{3.10}
\end{equation}
where $P_{V}:L^{2}(\mathbb{S}^{1})\to L^{2}(\mathbb{S}^{1})$ is the orthognal projection on $V$. Lemma 2.6 implies that for any $C>0$ there exists a $v_c$ such that 
\begin{equation}
\left\|H_{\sigma_0}v_{c} \right\|^2 >  C^{2}\left\| \left(
    \begin{array}{ccc}
      G^{*} \\
      P_{V} \\
    \end{array}
\right) v_c \right\|^2=C^{2}(\left\|G^{*}v_{c} \right\|^2+\left\|P_{V}v_{c} \right\|^2).\label{3.11}
\end{equation}
Hence, there exists a sequence $(v_m)_{m\in \mathbb{N}} \subset  L^{2}(\mathbb{S}^{1})$ such that $\left\|H_{\sigma_0}v_{m} \right\| \to \infty$ and $\left\|G^{*}v_{m} \right\|^2+\left\|P_{V}v_{m} \right\| \to 0$ as $m \to \infty$. Setting $\tilde{v}_{m}:=v_{m}-P_{V}v_{m} \in V^{\bot}$ we have as $m\to \infty$,
\begin{equation}
\left\|H_{\sigma_0}\tilde{v}_{m} \right\| \geq \left\|H_{\sigma_0}v_{m} \right\|-\left\|H_{\sigma_0}\right\|\left\|P_{V}v_{m}\right\| \to \infty,\label{3.12}
\end{equation}
\begin{equation}
\left\|G^{*}\tilde{v}_{m} \right\| \leq \left\|G^{*}v_{m} \right\|+\left\| G^{*} \right\| \left\|P_{V}v_{m} \right\| \to 0.\label{3.13}
\end{equation}
This contradicts (\ref{3.6}). Therefore, we have $H^{*}_{\sigma}H_{\sigma}\not\leq_{\mathrm{fin}} -\mathrm{Re}F$. Theorem 1.1 has been shown. \qed
\section{Proof of Theorem 1.2}
In Section 4, we will show Theorem 1.2. Let $\Gamma \subset B$. We denote by $G_{\partial B}:H^{1/2}(\partial B) \to L^{2}(\mathbb{S}^{1})$ and $S_{\partial B}:\tilde{H}^{-1/2}(\sigma_0) \to H^{1/2}(\sigma_0)$ are the data-to-pattern operator and the single layer boundary operator, respectively corresponding to closed curve $\partial B$. They have  the same properties like Lemma 2.3 and 2.4 and we have $\tilde{H}_{\partial B}^{*}=G_{\partial B}S_{\partial B}$. (See, e.g., \cite{Kirsch and Grinberg}.) We define $T:H^{1/2}(\Gamma)\to H^{1/2}(\partial B)$ by 
\begin{equation}
Tf:=v\bigl|_{\partial B},
\label{4.1}
\end{equation}
where $v$ is a radiating solution such that   
\begin{equation}
\Delta v+k^2v=0 \ \mathrm{in} \ \mathbb{R}^2\setminus  \Gamma, \label{4.2}
\end{equation}
\begin{equation}
v=f \ \mathrm{on} \ \Gamma. \label{4.3}
\end{equation} 
$T$ is compact since its mapping is from $H^{1/2}(\Gamma)$ to $C^{\infty}(\partial B)$. Furthermore, by the definition of $T$ we have that $G=G_{\partial B}T$. Thus, we have
\begin{eqnarray}
\tilde{H}^{*}_{\partial B}\tilde{H}_{\partial B}+\mathrm{Re}F 
&=&G_{\partial B}S_{\partial B}S^{*}_{\partial B}G^{*}_{\partial B}+G_{\partial B}\bigl[-T\mathrm{Re}(S)T^{*} \bigr]G_{\partial B}^{*}
\nonumber\\
&=&G_{\partial B}\bigl[S_{\partial B, i}S^{ * }_{\partial B, i}+ K \bigr]G_{\partial B}^{*}
\nonumber\\
&=&\bigl[G_{\partial B}W^{*}\bigr]\bigl[W^{*\ -1}S_{\partial B,i}S^{ *}_{\partial B,i}W^{-1}+ K'\bigr]\bigl[G_{\partial B}W^{*}\bigr]^{*}, \ \ \ \ \ \ \ \ \  \label{4.4}
\end{eqnarray}
where $K$ and $K'$ are some self-adjoint compact operators, and $W:=S^{1/2}_{\partial B,i}:H^{-1/2}(\partial B) \to L^{2}(\partial B)$ is an extension of the square root of $S_{\partial B,i}$. Let $V$ be the sum of eigenspaces of $K'$ associated to eigenvalues less than $-\frac{1}{2}\left\| S^{ *}_{\partial B,i}W^{-1} \right\|^{-2}$. Then $V$ is a finite dimensional, and for all $g \in \bigl[(G_{\partial B}W^{*})V\bigr]^{\bot}$ we have
\begin{eqnarray}
&&\langle (\tilde{H}^{*}_{\partial B}\tilde{H}_{\partial B}+\mathrm{Re}F)g, g \rangle
\nonumber\\
&=&
\left\| (S^{ *}_{\partial B,i}W^{-1})\bigl[G_{\partial B}W^{*}\bigr]^{*}g \right\|^{2}_{H^{1/2}(\partial B)}+\langle K'\bigl[G_{\partial B}W^{*}\bigr]^{*}g, \bigl[G_{\partial B}W^{*}\bigr]^{*}g \rangle_{L^{2}(\partial B)}
\nonumber\\
&\geq&\left\| (S^{ *}_{\partial B,i}W^{-1})^{-1}\right\|^{-2} \left\| \bigl[G_{\partial B}W^{*}\bigr]^{*}g \right\|^{2}-\frac{1}{2}\left\| (S^{ *}_{\partial B,i}W^{-1})^{-1} \right\|^{-2}\left\| \bigl[G_{\partial B}W^{*}\bigr]^{*}g \right\|^{2}
\nonumber\\
&\geq& 0.
\label{4.5}
\end{eqnarray}
Therefore, $-\mathrm{Re}F \leq_{\mathrm{fin}} \tilde{H}^{*}_{\partial B}\tilde{H}_{\partial B}$.
\par
Let now $\Gamma \not\subset B$ and assume on the contrary $-\mathrm{Re}F \leq_{\mathrm{fin}} \tilde{H}^{*}_{\partial B}\tilde{H}_{\partial B}$, i.e., by Lemma 2.2 there exists a finite dimensional subspace $V$ in $L^{2}(\mathbb{S}^{1})$ such that
\begin{equation}
\langle (\tilde{H}^{*}_{\partial B}\tilde{H}_{\partial B}+\mathrm{Re}F)v, v \rangle \geq 0,\label{4.6}
\end{equation}
for all $v \in V^{\bot}$. Since $\Gamma \not\subset B$,  we can take a small open arc $\Gamma_0 \subset \Gamma$ such that $\Gamma_0 \cap B= \emptyset$. We define $L:H^{1/2}(\Gamma_0)\to H^{1/2}(\Gamma)$ by
\begin{equation}
Lf:=v\bigl|_{\Gamma},
\label{4.7}
\end{equation}
where $v$ is a radiating solution such that   
\begin{equation}
\Delta v+k^2v=0 \ \mathrm{in} \ \mathbb{R}^2\setminus  \Gamma_0, \label{4.8}
\end{equation}
\begin{equation}
v=f \ \mathrm{on} \ \Gamma_0. \label{4.9}
\end{equation} 
By the definition of $L$, we have $G_{\Gamma_0}=GL$. By $\tilde{H}_{\Gamma_0}=S^{*}_{\Gamma_0}G^{*}_{\Gamma_0}$, we have
\begin{eqnarray}
\left\| H_{\Gamma_0}x \right\|^{2}_{L^{2}(\Gamma_0)}
&\leq&
\left\|  \tilde{H}_{\Gamma_0}x \right\|^{2}_{H^{1/2}(\Gamma_0)}
\nonumber\\
&\leq&
\left\| S^{*}_{\Gamma_0} \right\|^{2} \left\| G^{*}_{\Gamma_0}x \right\|^{2}
\nonumber\\
&\leq&
\left\| S^{*}_{\Gamma_0} \right\|^{2} \left\| L^{*}\right\|^{2} \left\|G^{*}x \right\|^{2},\label{4.10}
\end{eqnarray}
for $x \in L^{2}(\mathbb{S}^{1})$. Since $\mathrm{Re}S$ is of the form $\mathrm{Re}S=S_{i}+\mathrm{Re}(S-S_i)$, by the similar argument in (\ref{3.2})--(\ref{3.3}) and (\ref{4.4})--(\ref{4.5}), there exists a finite dimensional subspace $W$ in $L^{2}(\mathbb{S}^{1})$ such that for $x \in W^{\bot}$
\begin{eqnarray}
\left\| G^{*}x \right\|^{2}\leq
C\langle (\mathrm{Re}S)G^{*}x, G^{*}x \rangle
=C\langle (-\mathrm{Re}F)x, x \rangle.
\label{4.11}
\end{eqnarray}
Collecting (\ref{4.6}), (\ref{4.10}), and (\ref{4.11}) we have
\begin{eqnarray}
\left\| H_{\Gamma_{0}}x \right\|^{2}
&\leq& C\langle (-\mathrm{Re}F)x, x \rangle
\leq 
C\left\| \tilde{H}_{\partial B}x \right\|^{2} 
\nonumber\\
&\leq& C\left\| S^{*}_{\partial B} \right\|^{2} \left\| G^{*}_{\partial B}x \right\|_{H^{-1/2}(\partial B)}^{2}.
\label{4.12}
\end{eqnarray}
for $x \in (V\cup W)^{\bot}$. 
\begin{lem}
\begin{description}
\item[(a)]
$\mathrm{dim}(\mathrm{Ran}(H_{\Gamma_0}^{*}))=\infty$
\item[(b)]
$\mathrm{Ran}(G_{\partial B})\cap \mathrm{Ran}(H_{\Gamma_0}^{*})=\{ 0 \}$.
\end{description}
\end{lem}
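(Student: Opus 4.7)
The plan is to imitate the proof of Lemma 3.1 almost step-for-step, with the arc $\Gamma_0$ in the role of $\sigma_0$ and the closed curve $\partial B$ in the role of $\Gamma$.

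For part (a), I would factor $H_{\Gamma_0} = J_{\Gamma_0} S_{\Gamma_0}^{*} G_{\Gamma_0}^{*}$ exactly as in (3.7), where $J_{\Gamma_0}$, $S_{\Gamma_0}$, $G_{\Gamma_0}$ are the compact embedding, single-layer operator, and data-to-pattern operator associated to $\Gamma_0$. Taking adjoints gives $H_{\Gamma_0}^{*} = G_{\Gamma_0} S_{\Gamma_0} J_{\Gamma_0}^{*}$. Since $J_{\Gamma_0}$ is a compact injection with dense range, $\mathrm{Ran}(J_{\Gamma_0}^{*})$ is a dense, infinite-dimensional subspace; post-composing with the injective operator $G_{\Gamma_0} S_{\Gamma_0}$ (injectivity of $G_{\Gamma_0}$ is Lemma 2.3 applied to $\Gamma_0$, and $S_{\Gamma_0}$ is an isomorphism by Lemma 2.4(a)) preserves infinite dimension, yielding the claim.

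For part (b), I would pick $h$ in the intersection and write $h = v_{\partial B}^{\infty} = v_{\Gamma_0}^{\infty}$ for the corresponding radiating solutions. First I would shrink $\Gamma_0$ if necessary so that $\overline{\Gamma_0} \cap \overline{B} = \emptyset$, which is possible because $B$ is open and $\Gamma_0 \cap B = \emptyset$. Rellich's lemma and unique continuation then force $v_{\partial B} = v_{\Gamma_0}$ on the unbounded component of $\mathbb{R}^{2} \setminus (\partial B \cup \overline{\Gamma_0})$. Since $\Gamma_0$ lies off $\partial B$, $v_{\partial B}$ is smooth (in fact real-analytic) in a neighborhood of $\Gamma_0$, so matching both traces and normal derivatives on either side of $\Gamma_0$ kills the potential jump of $v_{\Gamma_0}$ there. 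By elliptic regularity $v_{\Gamma_0}$ then extends to a smooth, radiating Helmholtz solution on all of $\mathbb{R}^{2}$, hence vanishes identically, and $h = 0$.

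The main subtlety compared to Lemma 3.1(b) is that $\partial B$ is a closed curve, so $\mathbb{R}^{2} \setminus \partial B$ is disconnected: one has to isolate the unbounded component when applying Rellich, and cannot hope to identify $v_{\partial B}$ and $v_{\Gamma_0}$ inside $B$ (where $k^{2}$ might be an interior Dirichlet eigenvalue and the two functions need not agree). The gluing into a global entire solution must therefore be carried out through $v_{\Gamma_0}$, whose smoothness across both $\Gamma_0$ and $\partial B$ is guaranteed once $\overline{\Gamma_0} \cap \overline{B} = \emptyset$ has been arranged.
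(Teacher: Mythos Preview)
Your proposal is correct and follows essentially the same route as the paper: part~(a) is literally the paper's argument (the paper just says ``same argument as in Lemma~3.1''), and for part~(b) the paper also equates the two far-field patterns, invokes Rellich plus unique continuation on the exterior, and then patches together a global radiating Helmholtz solution which must vanish. Your extension of $v_{\Gamma_0}$ across $\Gamma_0$ via the smoothness of $v_{\partial B}$ is the same gluing written from the other side, and your extra care about arranging $\overline{\Gamma_0}\cap\overline{B}=\emptyset$ and about not using $v_{\partial B}$ inside $B$ simply makes explicit two points the paper leaves implicit.
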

\begin{proof}[{\bf Proof of Lemma 4.1}]
(a) is given by the same argument in Lemma 3.1. 
\par
{\bf (b)} By (\ref{3.7}), we have $\mathrm{Ran}(H_{\Gamma_0}^{*})\subset \mathrm{Ran}(G_{\Gamma_0})$. Let $h \in \mathrm{Ran}(G_{\partial B})\cap \mathrm{Ran}(G_{\Gamma_0})$, i.e., $h=v_{B}^{\infty}=v_{\Gamma_0}^{\infty}$ where $v_{B}^{\infty}$ and $v_{\Gamma_0}^{\infty}$ are far field patterns associated to scatterers $B$ and $\Gamma_0$ respectively. Then by Rellich lemma and unique continuation we have $v_{B}=v_{\Gamma_0}\ \mathrm{in} \ \mathbb{R}^2\setminus  (B \cup \Gamma_0)$. Hence, we can define $v \in H^{1}_{loc}(\mathbb{R}^2)$ by
\begin{eqnarray}
v:=\left\{ \begin{array}{ll}
v_{B}=v_{\Gamma_0} & \quad \mbox{in $\mathbb{R}^2\setminus  (B \cup \Gamma_0)$}  \\
v_{\Gamma_0} & \quad \mbox{on $B$} \\
v_{B} & \quad \mbox{on $\Gamma_0$} \\
\end{array} \right.\label{4.13}
\end{eqnarray}
and $v$ is a radiating solution to
\begin{equation}
\Delta v+k^2v=0 \ \mathrm{in} \ \mathbb{R}^2. \label{4.14}
\end{equation}
Thus $v=0 \ \mathrm{in} \ \mathbb{R}^2$, which implies that $h=0$.   
\end{proof}
By the above lemma and using Lemma 2.7, we get
\begin{equation}
\mathrm{Ran}(H^{*}_{\Gamma_0}) \not\subseteq \mathrm{Ran}(G_{\partial B})+(V\cup W)= \mathrm{Ran}(G_{\partial B},\ P_{V\cup W}),\label{4.15}
\end{equation}
where $P_{V\cup W}:L^{2}(\mathbb{S}^{1})\to L^{2}(\mathbb{S}^{1})$ is the orthognal projection on $V\cup W$. Lemma 2.6 implies that for any $C>0$ there exists a $x_c$ such that 
\begin{equation}
\left\|H_{\Gamma_0}x_{c} \right\|^2 >  C^{2}\left\| \left(
    \begin{array}{ccc}
      G^{*}_{\partial B} \\
      P_{V\cup W} \\
    \end{array}
\right) x_c \right\|^2=C^{2}(\left\|G^{*}_{\partial B}x_{c} \right\|^2+\left\|P_{V\cup W}x_{c} \right\|^2).\label{4.16}
\end{equation}
Hence, there exists a sequence $(x_m)_{m\in \mathbb{N}} \subset  L^{2}(\mathbb{S}^{1})$ such that $\left\|H_{\Gamma_0}x_{m} \right\| \to \infty$ and $\left\|G^{*}_{\partial B}x_{m} \right\|^2+\left\|P_{V\cup W}x_{m} \right\| \to 0$ as $m \to \infty$. Setting $\tilde{x}_{m}:=x_{m}-P_{V\cup W}x_{m} \in (V\cup W)^{\bot}$ we have as $m\to \infty$,
\begin{equation}
\left\|H_{\Gamma_0}\tilde{x}_{m} \right\| \geq \left\|H_{\Gamma_0}x_{m} \right\|-\left\|H_{\Gamma_0}\right\|\left\|P_{V\cup W}x_{m}\right\| \to \infty,\label{4.17}
\end{equation}
\begin{equation}
\left\|G^{*}_{\partial B}\tilde{x}_{m} \right\| \leq \left\|G^{*}_{\partial B}x_{m} \right\|+\left\| G^{*}_{\partial B} \right\| \left\|P_{V\cup W}x_{m} \right\| \to 0.\label{4.18}
\end{equation}
This contradicts (\ref{4.12}). Therefore, we have $  -\mathrm{Re}F  \not\leq_{\mathrm{fin}} \tilde{H}^{*}_{\partial B}\tilde{H}_{\partial B}$. Theorem 1.2 has been shown. \qed
\section{Numerical examples}
In Section 5, we discuss the numerical examples based on Theorem 1.1. The following three open arcs $\Gamma_j$ ($j=1,2,3$) are considered (see Figure 1):
\begin{description}
  \item[(a)] $\Gamma_1=\left\{(s, s) | -1\leq s \leq 1 \right\}$
  \item[(b)] $\Gamma_2=\left\{\biggl(2\mathrm{sin}\Bigl(\frac{\pi}{8}+(1+s)\frac{3\pi}{8} \Bigr) -\frac{2}{3}, \ \mathrm{sin}\Bigl(\frac{\pi}{4}+(1+s)\frac{3\pi}{4} \biggr) \biggl| -1\leq s \leq 1 \right\}$
  \item[(c)] $\Gamma_3=\left\{\biggl(s, \ \mathrm{sin}\Bigl(\frac{\pi}{4}+(1+s)\frac{3\pi}{4} \biggr) \biggl| -1\leq s \leq 1 \right\}$
\end{description}

\vspace{5mm}
\begin{figure}[h]
 \begin{minipage}[b]{0.325\linewidth}
  \centering
  \includegraphics[keepaspectratio, scale=0.15]
  {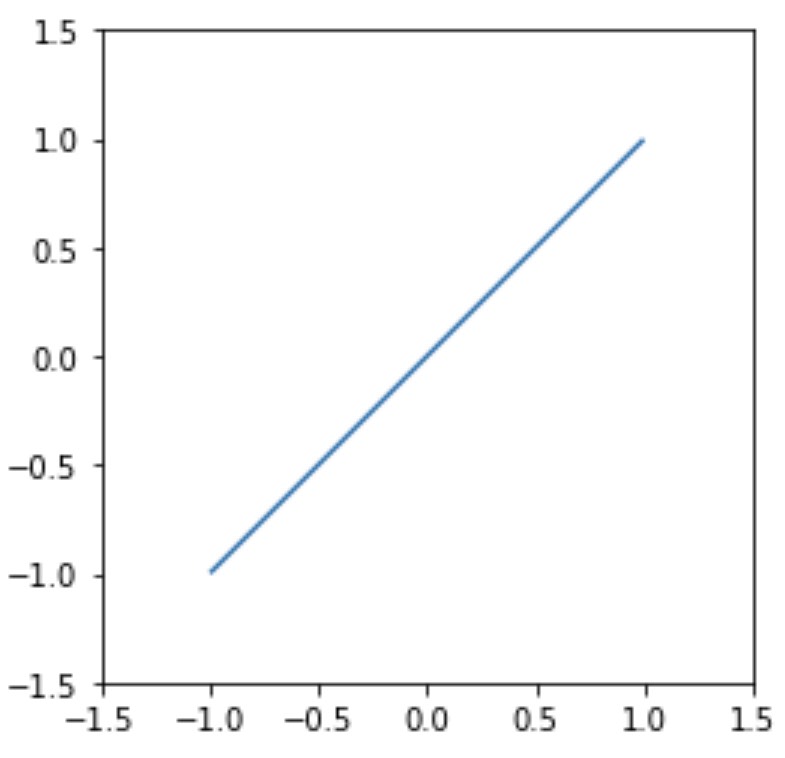}
  \subcaption{$\Gamma_1$}
 \end{minipage}
 \begin{minipage}[b]{0.325\linewidth}
  \centering
  \includegraphics[keepaspectratio, scale=0.15]
  {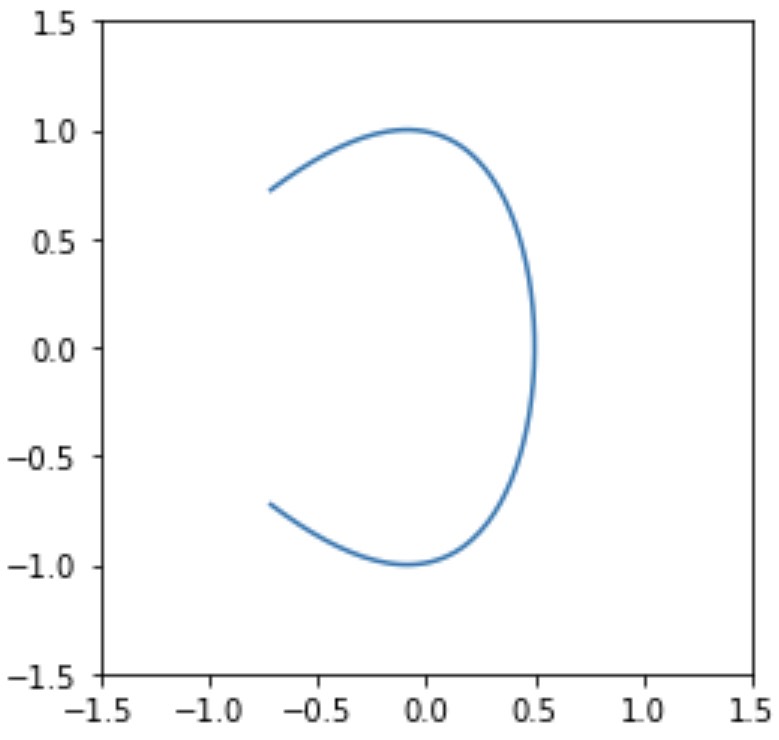}
  \subcaption{$\Gamma_2$}
 \end{minipage}
 \begin{minipage}[b]{0.325\linewidth}
  \centering
  \includegraphics[keepaspectratio, scale=0.15]
  {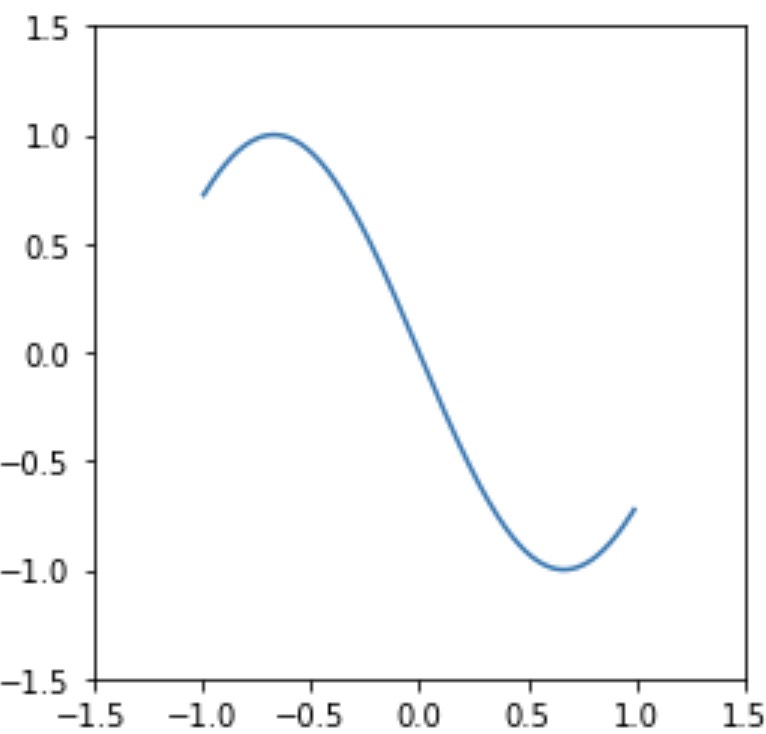}
  \subcaption{$\Gamma_3$}
 \end{minipage}
 \caption{The original open arc}
\end{figure}
\vspace{5mm}

Based on Theorem 1.1, the indicator function in our examples is given by 
\begin{equation}
I(\sigma):= \# \left\{\mathrm{negative} \ \mathrm{eigenvalues}  \ \mathrm{of} -\mathrm{Re}F-H^{*}_{\sigma}H_{\sigma} \right\}
\end{equation}
The idea to reconstruct $\Gamma_j$ is to plot the value of $I(\sigma)$ for many of small $\sigma$ in the sampling region. Then, we expect from Theorem 1.1 that the value of the function $I(\sigma)$ is low if $\sigma$ is close to $\Gamma_j$. 
\par
Here, $\sigma$ is chosen in two ways; One is the vertical line segment $\sigma^{ver}_{i,j}:=z_{i,j}+\{0 \} \times [-\frac{R}{2M}, \frac{R}{2M}]$ where $z_{i,j}:=(\frac{Ri}{M}, \frac{Rj}{M})$ ($i,j = -M, -M+1, ..., M$) denote the center of $\sigma^{ver}_{i,j}$, and $\frac{R}{M}$ is the length of $\sigma^{ver}_{i,j}$, and $R>0$ is length of sampling square region $[-R, R]^2$, and $M \in \mathbb{N}$ is large to take a small segment. The other is horizontal one $\sigma^{hor}_{i,j}:=z_{i,j}+[-\frac{R}{2M}, \frac{R}{2M}] \times \{0 \}$.
\par
The far field operator $F$ is approximated by the matrix
\begin{equation}
F \approx \frac{2\pi}{N} \bigl(u^{\infty}(\hat{x}_l, \theta_m) \bigr)_{1 \leq l,m \leq N} \in \mathbb{C}^{N \times N}
\end{equation}
where $\hat{x}_l=\bigl(\mathrm{cos}(\frac{2\pi l}{N}), \mathrm{sin}(\frac{2\pi l}{N}) \bigr)$ and $\theta_m=\bigl(\mathrm{cos}(\frac{2\pi m}{N}), \mathrm{sin}(\frac{2\pi m}{N}) \bigr)$. The far field pattern $u^{\infty}$ of the problem (\ref{1.1})--(\ref{1.3}) is computed by the Nystrom method in \cite{Kress}. The operator $H^{*}_{\sigma}H_{\sigma}$ is approximated by
\begin{equation}
H^{*}_{\sigma}H_{\sigma} \approx \frac{2\pi}{N} \biggl(\int_{\sigma}e^{iky\cdot(\theta_m-\hat{x}_l)}dy \biggr)_{1 \leq l,m \leq N} \in \mathbb{C}^{N \times N}
\end{equation}
When $\sigma$ is given by the vertical and horizontal line segment, we can compute the integrals
\begin{equation}
\int_{\sigma^{ver}_{i,j}}e^{iky\cdot(\theta_m-\hat{x}_l)}dy=\frac{R}{M}e^{ik(\theta_m-\hat{x}_l)\cdot z_{i,j}}\mathrm{sinc}\biggl(\frac{kR}{2M\pi}\Bigl( \mathrm{sin}\bigl(\frac{2\pi m}{N}\bigr)-\mathrm{sin}\bigl(\frac{2\pi l}{N}\bigr) \bigr) \biggr)
\end{equation}
\begin{equation}
\int_{\sigma^{hor}_{i,j}}e^{iky\cdot(\theta_m-\hat{x}_l)}dy=\frac{R}{M}e^{ik(\theta_m-\hat{x}_l)\cdot z_{i,j}}\mathrm{sinc}\biggl(\frac{kR}{2M\pi}\Bigl( \mathrm{cos}\bigl(\frac{2\pi m}{N}\bigr)-\mathrm{cos}\bigl(\frac{2\pi l}{N}\bigr) \bigr) \biggr)
\end{equation}
\par
In our examples we fix $R=1.5$, $M=100$, $N=60$, and wavenumber $k=1$. The Figure 2 is given by plotting the values of the vertical indicator function 
\begin{equation}
I_{ver}(z_{i,j}):=I(\sigma^{ver}_{i,j})
\end{equation}
for each $i, j = -100, 99, ..., 100$. The Figure 3 is given by plotting the values of the horizontal indicator function 
\begin{equation}
I_{hor}(z_{i,j}):=I(\sigma^{hor}_{i,j})
\end{equation}
for each $i, j = -100, -99, ..., 100$. We obverse that $\Gamma_j$ seems to be reconstructed independently of the direction of linear segment.

\begin{figure}[h]
 \begin{minipage}[b]{0.325\linewidth}
  \centering
  \includegraphics[keepaspectratio, scale=0.13]
  {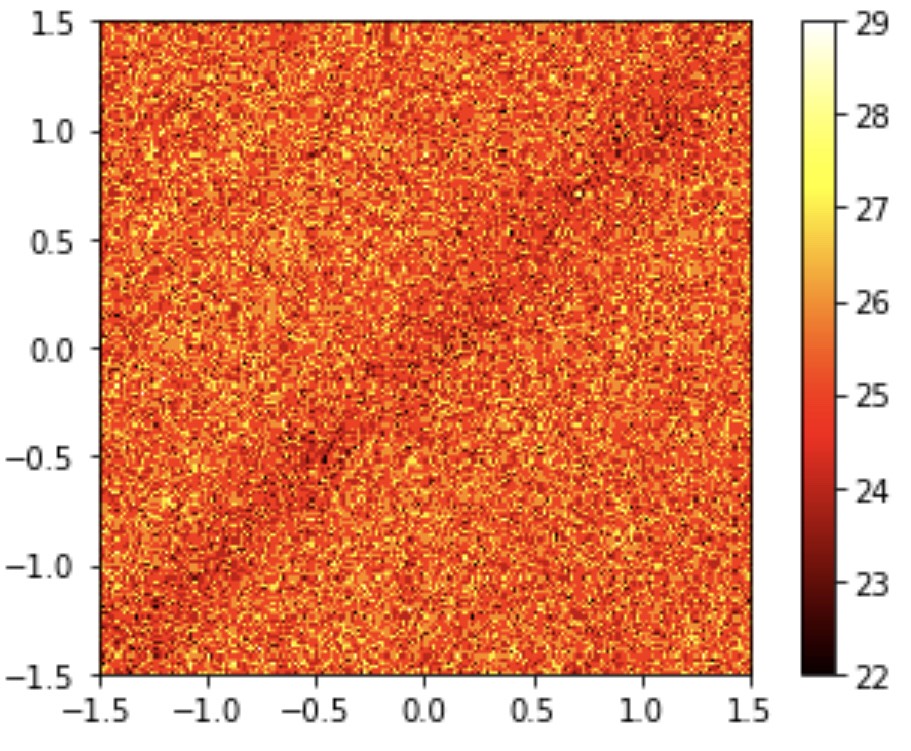}
  \subcaption{$\Gamma_1$}
 \end{minipage}
 \begin{minipage}[b]{0.325\linewidth}
  \centering
  \includegraphics[keepaspectratio, scale=0.13]
  {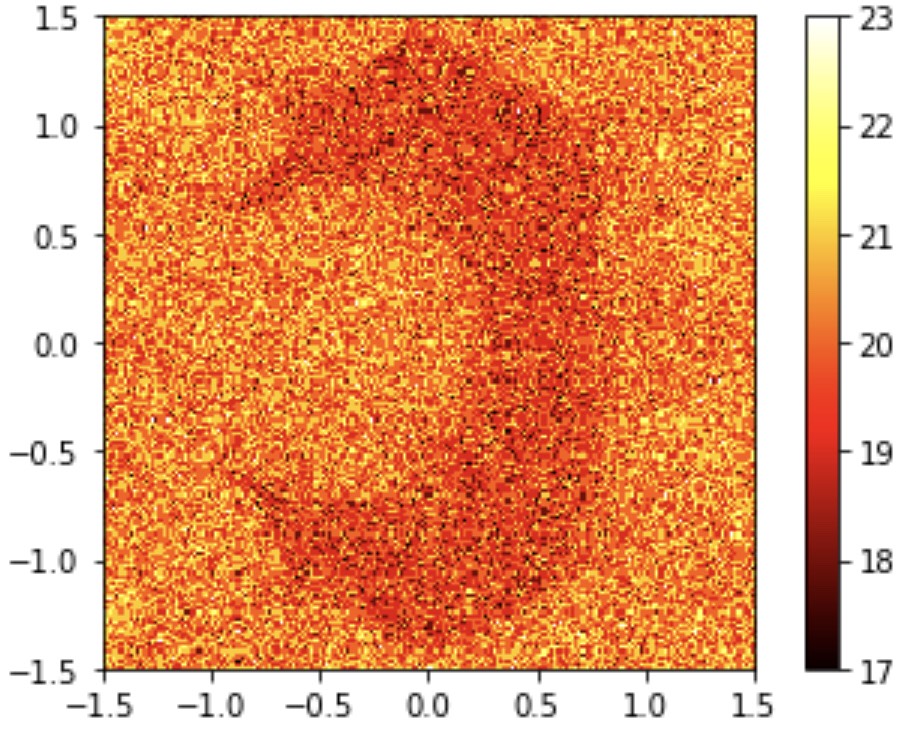}
  \subcaption{$\Gamma_2$}
 \end{minipage}
 \begin{minipage}[b]{0.325\linewidth}
  \centering
  \includegraphics[keepaspectratio, scale=0.13]
  {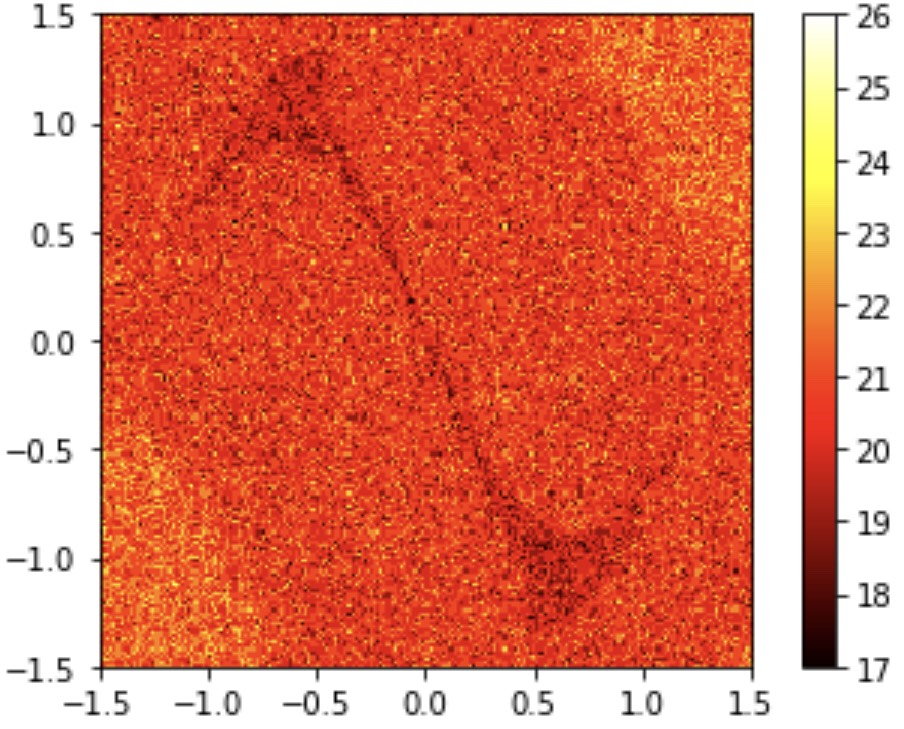}
  \subcaption{$\Gamma_3$}
 \end{minipage}
 \caption{Reconstruction by the vertical indicator function $I_{ver}$}
 \vspace{5mm}
\end{figure}

\begin{figure}[h]
 \begin{minipage}[b]{0.325\linewidth}
  \centering
  \includegraphics[keepaspectratio, scale=0.13]
  {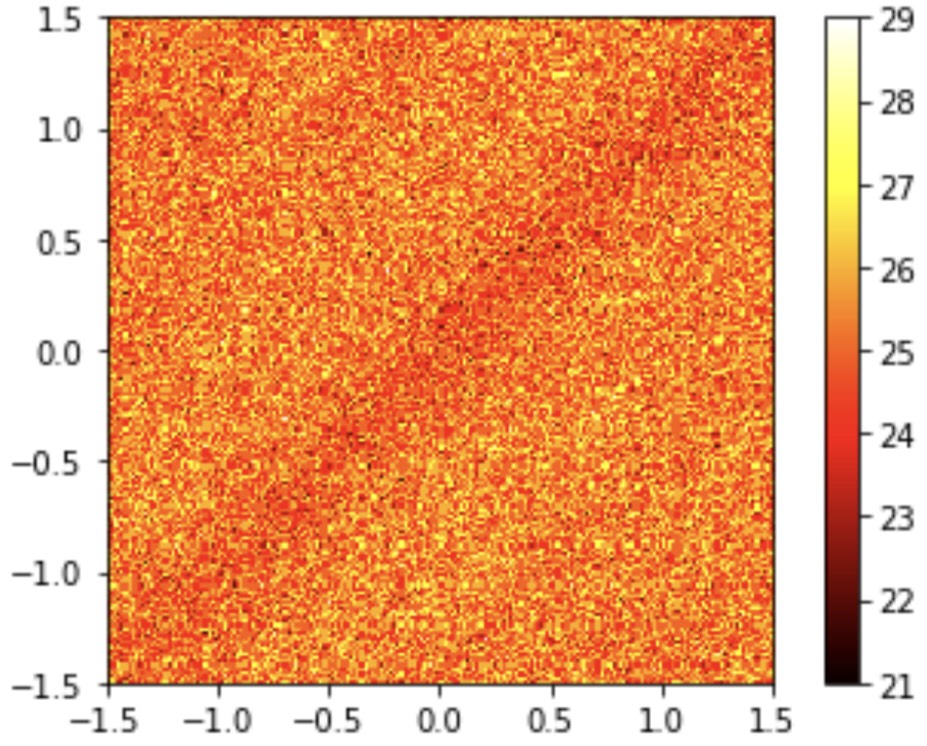}
  \subcaption{$\Gamma_1$}
 \end{minipage}
 \begin{minipage}[b]{0.325\linewidth}
  \centering
  \includegraphics[keepaspectratio, scale=0.13]
  {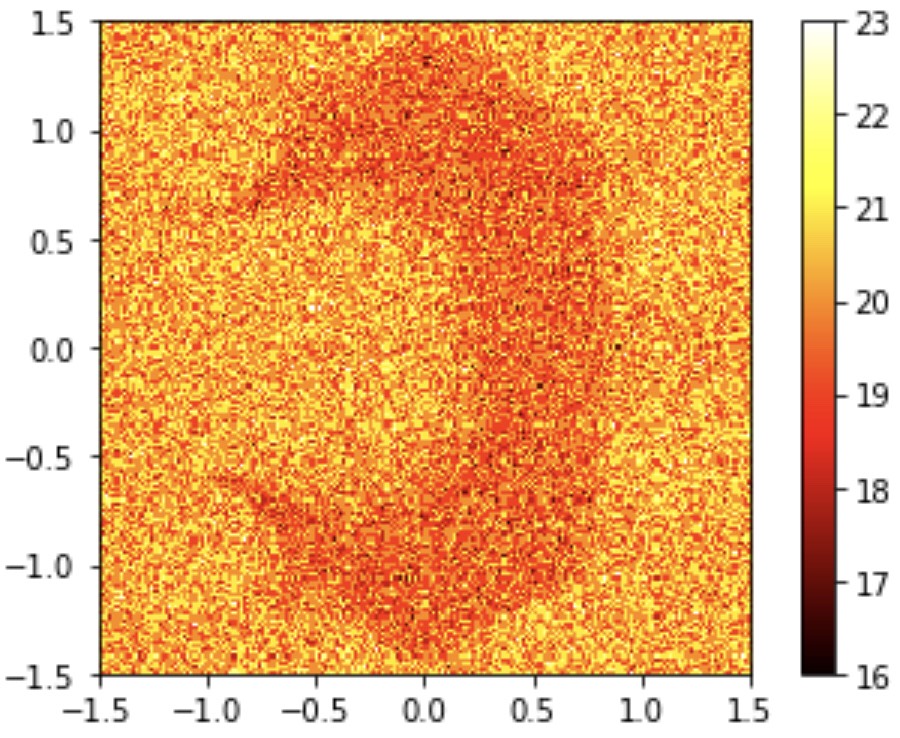}
  \subcaption{$\Gamma_2$}
 \end{minipage}
 \begin{minipage}[b]{0.325\linewidth}
  \centering
  \includegraphics[keepaspectratio, scale=0.13]
  {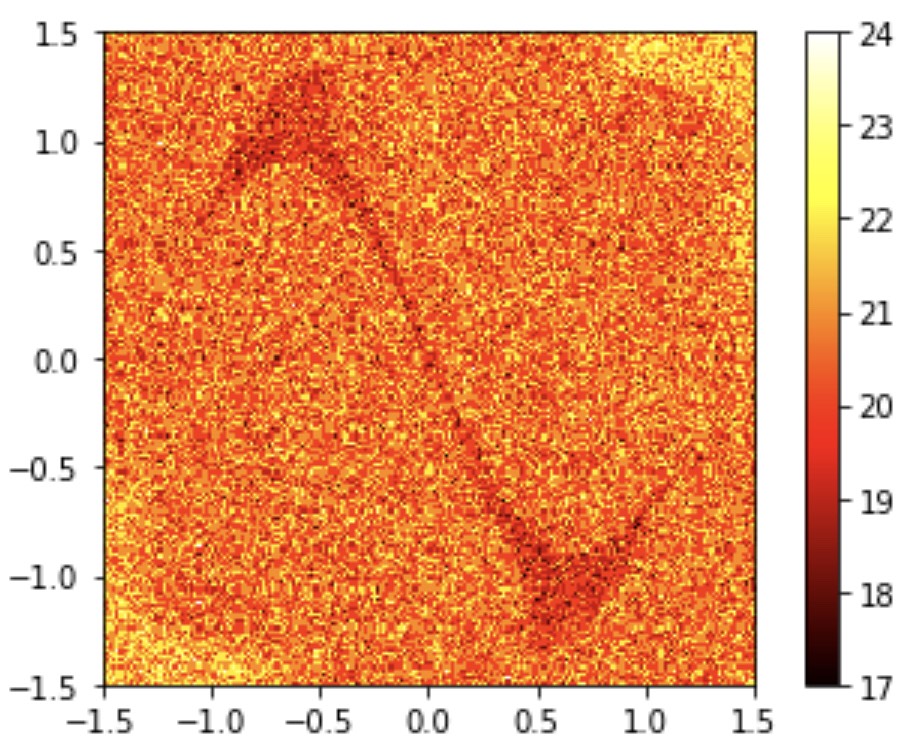}
  \subcaption{$\Gamma_3$}
 \end{minipage}
 \caption{Reconstruction by the horizontal indicator function $I_{hor}$}
 \vspace{5mm}
\end{figure}
\section*{Acknowledgments}
Authors thank to Professor Bastian von Harrach, who gave us helpful comments in our study.


\noindent E-mail address: takashi.furuya0101@gmail.com

\end{document}